\titleformat{\subsection}{\it}{\thesubsection.\enspace}{1.5pt}{}
\titleformat{\subsubsection}{\it}{\thesubsubsection.\enspace}{1.5pt}{}
\newtheorem{theo}{Theorem}[section]
\newtheorem{lemm}[theo]{Lemma}
\newtheorem{prop}[theo]{Proposition}
\newtheorem{rema}{Remark}[section]
\numberwithin{equation}{section}
\def\th2{\frac{\theta}{2}}
\begin{document}
\title{ Long-time Behavior of Solution for the Compressible Nematic Liquid Crystal Flows in $\mathbb{R}^3$  \hspace{-4mm}}
\author{Jincheng Gao$^\dag$  \quad Qiang Tao $^\ddag$ \quad Zheng-an Yao $^\dag$\\[10pt]
\small {$^\dag $School of Mathematics and Computational Science, Sun Yat-Sen University,}\\
\small {510275, Guangzhou, P. R. China}\\[5pt]
\small {$^\ddag$ College of Mathematics and Computational Science, Shenzhen University,}\\
\small {518060, Shenzhen, P. R. China}\\[5pt]
}


\footnotetext{Email: \it gaojc1998@163.com(J.C.Gao), taoq060@126.com(Q.Tao), \it mcsyao@mail.sysu.edu.cn(Z.A.Yao).}

\date{}

\maketitle

\begin{abstract}
In this paper, we investigate the Cauchy problem for the compressible nematic liquid crystal flows in
three-dimensional whole space. First of all, we establish the time decay rates
for compressible nematic liquid crystal flows by the method of spectral analysis and energy estimates.
Furthermore, we enhance the convergence rates for the higher-order spatial derivatives of density,
velocity and director. Finally, the time decay rates of mixed space-time derivatives of solution
are also established.

\vspace*{5pt}
\noindent{\it {\rm  Keywords}}: compressible nematic liquid crystal flows, global solution,
                         long-time behavior, Fourier splitting method.
\vspace*{5pt}

\noindent{\it {\rm 2010 Mathematics Subject Classification:}}\ {\rm 35Q35, 35B40, 76A15.}

\end{abstract}


\section{Introduction}
\quad In this paper, we investigate the motion of compressible nematic liquid crystal flows,
which are governed by the following simplified version of the Ericksen-Leslie equations
\begin{equation}\label{1.1}
\left\{
\begin{aligned}
&\rho_t+{\rm div}(\rho u)=0,\\
&(\rho u)_t+{\rm div}(\rho u \otimes u)-\mu \Delta u-(\mu+\nu)\nabla {\rm div} u+\nabla P(\rho)
  =-\gamma \nabla d \cdot \Delta d,\\
&d_t+u\cdot \nabla d=\theta(\Delta d+|\nabla d|^2 d),
\end{aligned}
\right.
\end{equation}
where $\rho, u, P(\rho)$ and $d$ stand for the density, velocity, pressure and macroscopic average
of the nematic liquid crystal orientation field respectively. The constants $\mu$ and $\nu$
are shear viscosity and the bulk viscosity coefficients of the fluid, respectively, that satisfy
the physical assumptions
\begin{equation}\label{1.2}
\mu >0, ~ 2\mu+3\nu \ge 0.
\end{equation}
The positive constants $\gamma$ and $\theta$ represent the competition between the kinetic energy and the
potential energy, and the microscopic elastic relaxation time for the molecular orientation field, respectively.
For the sake of simplicity, we set the constants $\gamma$, $\theta$ and $P'(1)$ to be $1$. The symbol $\otimes$
denotes the Kronecker tensor product such that $u\otimes u=(u_i u_j)_{1\le i, j \le 3}$. To complete
the system \eqref{1.1}, the initial data is given by
\begin{equation}\label{1.3}
\left.(\rho, u, d)(x,t)\right|_{t=0}=(\rho_0(x), u_0(x), d_0(x)).
\end{equation}
As the space variable tends to infinity, we assume
\begin{equation}\label{1.4}
\underset{|x|\rightarrow \infty}{\lim}(\rho_0-1, u_0, d_0-w_0)(x)=0,
\end{equation}
where $w_0$ is an unit constant vector.
The system is a coupling between the compressible Navier-Stokes equations and a transported heat flow of
harmonic maps into $S^2$. It is a macroscopic continuum description of the evolution for the liquid crystals
of nematic type under the influence of both the flow field $u$ and the macroscopic description of the
microscopic orientation configuration $d$ of rod-like liquid crystals. Generally speaking, the system
\eqref{1.1} can not be obtained any better results than the compressible Navier-Stokes equations.

The hydrodynamic theory of liquid crystals in the nematic case has been established by Ericksen \cite{Ericksen}
and Leslie \cite{Leslie} during the period of $1958$ through $1968$.
Since then, the mathematical theory is still progressing and the study of the full Ericksen-Leslie model
presents relevant mathematical difficulties. The pioneering work comes from Lin and his partners
\cite{{Hard-Kinderlehrer-Lin}, {Lin},{Lin-Liu1}, {Lin-Liu2}}.
For example, Lin and Liu \cite{Lin-Liu1} obtained
the global weak and smooth solutions for the Ginzburg-Landau approximation to relax the nonlinear constraint
$d \in S^2.$ They also discussed the uniqueness and some stability properties of the system. Later, the decay
rates for this approximate system are given by Wu \cite{Wuhao} for a bounded domain and  Dai \cite{{Dai1},{Dai2}}
for the Cauchy problem respectively. Recently, Liu and Zhang \cite{Liu-Zhang}, for the density dependent model,
obtained the global weak solutions in dimension three with the initial density $\rho_0 \in L^2$, which was improved
by Jiang and Tan \cite{Jiang-Tan} for the case $\rho_0 \in L^{\gamma}(\gamma > \frac{3}{2})$.
Under the constraint $d \in S^2$, Wen and Ding \cite{Wen-Ding} established the local existence for the
strong solution and
obtained the global solution under the assumptions of small energy and positive initial density.
Later, Hong \cite{Hong} and Lin, Lin and Wang \cite{Lin-Lin-Wang} showed independently the global existence of a weak solution in two-dimensional space. Recently, Wang \cite{Wang}
established a global well-posedness theory for rough initial data provided that $\|u_0\|_{{\rm BMO}^{-1}}+[d_0]_{BMO} \le \varepsilon_0$ for some $\varepsilon_0 >0.$  Under this condition, Du and Wang \cite{Du-Wang1} obtained arbitrary space-time regularity for the Koch and Tataru type solution $(u, d)$. As a corollary, they also got the decay rates.
Very Recently, Lin and Wang \cite{Lin-Wang} established the global existence of a weak solution for the initial-boundary value or the Cauchy problem by restricting the initial director field on the unit upper hemisphere.
For more results, readers can refer to \cite{{Huang-Wang},{Du-Wang2},{Li2},{Li-Wang1},{Li-Wang2},{Hao-Liu}}
and references therein.

Considering the compressible nematic liquid crystal flows \eqref{1.1}, Ding, Lin, Wang and Wen
\cite{Ding-Lin-Wang-Wen} have gained
both existence and uniqueness of global strong solution for the one dimensional space.
And this result about the classical solution
was improved by Ding, Wang and Wen \cite{Ding-Wang-Wen} by generalizing the fluids to be of vacuum.
For the case of multi-dimensional space, Jiang, Jiang and Wang \cite{Jiang-Jiang-Wang}
established the global existence of weak solutions to the initial-boundary problem with
large initial energy and without any smallness condition on the initial density and
velocity if some component of initial direction field is small. Recently, Lin, Lai and Wang
\cite{Lin-Lai-Wang} established the existence of global weak solutions in three-dimensional space,
provided the initial orientational director field $d_0$ lies in the hemisphere $S_2^+$.
Local existence of unique strong solution was proved if that the initial data $(\rho_0, u_0, d_0)$
was sufficiently regular and satisfied a natural compatibility
condition in a recent work \cite{Huang-Wang-Wen1}. Some blow-up criterions that were derived for the possible breakdown
of such local strong solution at finite time could be found in \cite{{Huang-Wang-Wen2},{Huang-Wang1}, {Gao-Tao-Yao}}.
The local existence and uniqueness of classical solution to \eqref{1.1} was established by Ma in \cite{Ma}.
On the other hand, Hu and Wu \cite{Hu-Wu} obtained the existence and uniqueness of global strong solution
in critical Besov spaces
provided that the initial data was close to an equilibrium state $(1,0, \hat{d})$ with a constant vector
$\hat{d}\in S^2$. For more results, the readers can refer to \cite{Lin-Wang1} that have introduced
some recent developments of analysis for hydrodynamic flow of nematic liquid crystal
flows and references therein.

If the director is an unit constant vector, then the compressible nematic liquid crystal flow \eqref{1.1}
becomes the compressible Navier-Stokes equations. The convergence rates
of solution for the compressible Navier-Stokes equations to the steady state has been investigated extensively
since the first global existence of small solutions in $H^3$(classical solutions) was improved by
Matsumura and Nishida \cite{Matsumura-Nishida1}.
For the small initial perturbation belongs to $H^3$ only,
Matsumura \cite{Matsumura} took  weighted energy method to show the optimal time decay rates
\begin{equation*}
\|\nabla^k(\rho-1,u)(t)\|_{L^2}\lesssim (1+t)^{-\frac{1}{2}}
\end{equation*}
for $k=1,2$ and
\begin{equation*}
\|(\rho-1,u)(t)\|_{L^\infty}\lesssim (1+t)^{-\frac{3}{4}}.
\end{equation*}
Furthermore, for the initial perturbation small in $L^1\cap H^3$,
Matsumura and Nishida \cite{Matsumura-Nishida2} obtained
\begin{equation*}
\|(\rho-1,u)(t)\|_{L^2}\lesssim (1+t)^{-\frac{3}{4}},
\end{equation*}
and for the small initial perturbation belongs to $H^m\cap W^{m,1}$, with $m\ge4$, Ponce \cite{Ponce}
proved the optimal $L^q$ decay rates
\begin{equation*}
\|\nabla^k(\rho-1,u)(t)\|_{L^q}\lesssim (1+t)^{-\frac{3}{2}(1-\frac{1}{q})-\frac{k}{2}}
\end{equation*}
for $2\le q \le \infty$ and $0 \le k \le 2$. With the help of the study of Green function,
the optimal $L^q(1\le q \le \infty)$ decay rates were also obtained
\cite{{Hoff-Zumbrum1},{Hoff-Zumbrum2},{Liu-Wang}}
for the small initial
perturbation to $H^m \cap L^1$ with $m \ge4.$ These results were extended to the exterior problem
\cite{{Kobayashi-Shibata},{Kobayashi}}
or the half space problem \cite{{Kagei-Kobayashi1},{Kagei-Kobayashi2}}
or with an external potential force \cite{Duan-Yang},
but without the smallness of
$L^1-$norm of the initial perturbation.
While based on a differential inequality, Deckelnick in \cite{{Deckelnick1},{Deckelnick2}}
obtained a slower (than the optimal) decay rate for the problem in unbounded domains
with external force through the pure energy method.
Recently, Guo and Wang \cite{Guo-Wang} developed a general energy method for proving the optimal time
decay rates of the solutions in the whole space as
\begin{equation}\label{Guo}
\|\nabla^l(\rho-1, u)(t)\|_{H^{N-l}}^2\lesssim (1+t)^{-(l+s)}
\end{equation}
for $0 \le l \le N-1$ by assuming the initial data
$\|(\rho_0-1,u_0)\|_{\dot{H}^{-s}}(s\in [0, \frac{3}{2}))$ is finite additionally.
This result was improved by Wang \cite{Wang-Yan-Jin} to establish the global existence of solution
by assuming the smallness of initial data of $H^3$ rather than $H^{\left[\frac{N}{2}\right]+2}$ norm.

In this paper, we establish the global solution by the energy method \cite{Guo-Wang} under the
assumption of smallness of $\|(\rho_0-1, u_0, \nabla d_0)\|_{H^3}$.
The difficulty for the system \eqref{1.1} is to deal with the nonlinear term $\nabla d \cdot \Delta d$
and the supercritical nonlinear term $|\nabla d|^2 d$. Since this proof is standard, we only sketch it as
an appendix for brevity. Furthermore, by assuming that $\|d_0-w_0\|_{L^2}$ and
$\|(\rho_0-1, u_0,  d_0-w_0)\|_{L^1} $ are finite additionally,  we establish the time decay rates
for the compressible nematic liquid crystal flows by the method of spectral analysis and energy estimates.
In order to improve the time decay rates of higher-order spatial derivatives of solution,
we take the strategy of induction that is promised by the Fourier splitting method.
Finally, we also study the decay rates for the mixed space-time derivatives of density, velocity and director.

\textbf{Notation:} In this paper, we use $H^s(\mathbb{R}^3)( s\in \mathbb{R})$ to denote the usual Sobolev spaces
with norm $\|\cdot\|_{H^s}$ and $L^p(\mathbb{R}^3)(1\le p \le \infty)$ to denote the usual $L^p$ spaces with norm
$\| \cdot \|_{L^p}$. The symbol $\nabla^l $ with an integer $l \ge 0$ stands for the usual any spatial derivatives
of order $l$. When $l$ is not an integer, $\nabla^l $ stands for $\Lambda^l$ defined by
$\Lambda^l f :=\mathscr{F}^{-1}(|\xi|^l \mathscr{F}f)$, where $\mathscr{F}$ is the usual Fourier transform operator
and $\mathscr{F}^{-1}$ its inverse.
We also denote $\mathscr{F}(f):=\hat{f}$.
The notation $a \lesssim b$ means that $a \le C b$ for a universal constant $C>0$ independent of
time $t$.
For the sake of simplicity, we write $\|(A, B)\|_X=\|A\|_X+\|B\|_X$ and $\int f dx:=\int _{\mathbb{R}^3} f dx.$

Now, we state our first result concerning the global existence of solution to the compressible nematic liquid
crystal flows \eqref{1.1}-\eqref{1.3} as follows.
\begin{theo}\label{Global-existence}
Assume that $(\rho_0-1, u_0, \nabla d_0) \in H^N$ for any integer $N \ge 3$ and there
exists a constant $\delta >0$ such that
\begin{equation}\label{1.6}
\|(\rho_0-1, u_0, \nabla d_0)\|_{H^3} \le \delta,
\end{equation}
then the problem \eqref{1.1}-\eqref{1.3} admits a unique global solution $(\rho, u, d)$ satisfying
for all $t \ge 0$,
\begin{equation}\label{1.7}
\begin{aligned}
\|(\rho-1, u, \nabla d)\|_{H^N}^2+
\int_0^t (\|\nabla \rho\|_{H^{N-1}}^2+\|(\nabla u, \nabla^2 d)\|_{H^N}^2 )d\tau
\le C \|(\rho_0-1, u_0, \nabla d_0)\|_{H^N}^2.
\end{aligned}
\end{equation}
\end{theo}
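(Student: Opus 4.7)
The plan is to combine a standard local existence theorem for the symmetric hyperbolic-parabolic system with a closed \emph{a priori} energy estimate, and then run a continuation argument. Writing $\varrho:=\rho-1$ and $\tilde d:=d-w_0$, the system \eqref{1.1} linearizes around the equilibrium $(0,0,w_0)$ into a compressible Navier-Stokes part coupled with a heat-type equation for $\tilde d$, with forcing
\[
 -\nabla d\cdot \Delta d,\qquad |\nabla d|^2 d,\qquad \mathrm{div}(\varrho u),\qquad u\cdot\nabla d,\qquad \text{etc.}
\]
all of which are at least quadratic in the perturbation variables once we recall $|\nabla d|^2 d = |\nabla \tilde d|^2(w_0+\tilde d)$. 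Once smallness is in place, the principal part is the same as in Matsumura-Nishida and Guo-Wang; the goal is to show
\[
 \frac{d}{dt}\mathcal{E}_N(t) + \mathcal{D}_N(t) \lesssim \sqrt{\mathcal{E}_3(t)}\,\mathcal{D}_N(t),
\]
where $\mathcal{E}_N \sim \|(\varrho,u,\nabla d)\|_{H^N}^2$ and $\mathcal{D}_N \sim \|\nabla\varrho\|_{H^{N-1}}^2+\|(\nabla u,\nabla^2 d)\|_{H^N}^2$. If $\mathcal{E}_3(0)$ is small enough, then $\sqrt{\mathcal{E}_3}\ll 1$ absorbs the right side and integration in time gives \eqref{1.7}.

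The heart of the argument is the hierarchy of energy estimates. For each $0\le k\le N$, I would apply $\nabla^k$ to the three equations, pair the density equation with $\nabla^k\varrho$, the velocity equation with $\nabla^k u$, and the director equation with $-\nabla^k \Delta d$ (to exploit the parabolic regularization on $\nabla d$, since only $\nabla d$ is in $H^N$, not $d$ itself). Summing, the convective and pressure terms at highest order cancel (up to commutators) against the symmetric structure $\varrho_t+\mathrm{div}\, u=\cdots$ and $\nabla\varrho$ in the momentum equation, leaving the dissipation $\mu\|\nabla^{k+1}u\|^2 + (\mu+\nu)\|\nabla^k\mathrm{div}\,u\|^2+\|\nabla^{k+1}\tilde d\|^2+\|\nabla^k\Delta d\|^2$ on the good side. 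This step yields no control on $\nabla\varrho$ yet. To recover it, I would carry out the now-standard ``cross'' estimate: test the momentum equation by $\nabla^{k+1}\varrho$ (after applying $\nabla^k$), use the density equation to replace the resulting $\mathrm{div}\, u$ term, and integrate by parts; this produces $c\|\nabla^{k+1}\varrho\|^2$ modulo $\|\nabla^{k+1}u\|^2$, $\|\nabla^{k+2}u\|^2$, and nonlinear terms. Adding this ``cross'' term to the basic energy estimate with a small coefficient $\eta$ gives $\mathcal{D}_N$ in its entirety, while the correction remains equivalent to the pure energy $\mathcal{E}_N$.

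The nonlinearities are handled by Leibniz expansion followed by Hölder, Sobolev embedding $H^2\hookrightarrow L^\infty$, and the Gagliardo-Nirenberg type commutator estimates
\[
 \|\nabla^k(fg)-f\nabla^k g\|_{L^2}\lesssim \|\nabla f\|_{L^\infty}\|\nabla^{k-1}g\|_{L^2} + \|\nabla^k f\|_{L^2}\|g\|_{L^\infty}.
\]
For the supercritical term $|\nabla d|^2 d$, I write $d=w_0+\tilde d$, use $|d|_{L^\infty}\lesssim 1+\|\tilde d\|_{L^\infty}\lesssim 1$, and distribute derivatives so each factor of $\nabla d$ is measured in $L^\infty$ whenever at most two derivatives land on it (again via $H^2\hookrightarrow L^\infty$) and in $L^2$ otherwise, yielding bounds of the form $\|\nabla d\|_{H^2}\|\nabla^{k}\nabla d\|_{L^2}\cdot \mathcal D_N^{1/2}$; the extra factor $\sqrt{\mathcal E_3}$ then shows up as desired. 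The term $\nabla d\cdot\Delta d$ is treated analogously, with the gain that $\nabla^2 d\in L^2$ at every level contributes to $\mathcal D_N$.

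The main obstacle is the simultaneous loss-of-derivative issues created by (i) the hyperbolic density equation, which forces the cross estimate described above, and (ii) the supercritical cubic $|\nabla d|^2 d$, which requires verifying that the highest-derivative term really can be absorbed by $\mathcal D_N$ rather than $\mathcal E_N$ (otherwise one gets only local existence). After these are in place, the \emph{a priori} estimate yields, by a standard bootstrap, $\mathcal E_3(t)\le 2\mathcal E_3(0)$ on the maximal existence interval and $\mathcal E_N(t)+\int_0^t\mathcal D_N\le C\mathcal E_N(0)$; combined with the local existence theorem, continuation gives the global solution and the bound \eqref{1.7}.
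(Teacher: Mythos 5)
Your proposal is correct and follows essentially the same route as the paper's appendix: order-by-order energy estimates with the director tested so as to control $\nabla d$ in $H^N$, a cross estimate pairing $\nabla^k u$ with $\nabla^{k+1}\varrho$ to recover dissipation of the density gradient, a linear combination with a small coefficient yielding an energy functional equivalent to $\|(\varrho,u,\nabla d)\|_{H^N}^2$ with full dissipation, and closure by smallness of the $H^3$ norm plus a continuity argument. The paper likewise only sketches this (citing Guo--Wang), so your level of detail on the nonlinear terms, including the supercritical $|\nabla d|^2 d$, is at least as complete as the source.
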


After having the global existence of solution for the compressible nematic liquid crystal flows
\eqref{1.1}-\eqref{1.3} at hand, we hope to investigate the long-time behavior of solution.
Hence, we establish the following time decay rates.

\begin{theo}\label{Decay1}
Under all the assumptions of Theorem \ref{Global-existence}, assuming the initial data $\|d_0-w_0\|_{L^2}$ and
$\|(\rho_0-1, u_0, d_0-w_0)\|_{L^1}$ are finite additionally, then the global solution $(\rho, u, d)$ of problem
\eqref{1.1}-\eqref{1.4} satisfies
\begin{equation}\label{1.8}
\begin{aligned}
&\|\nabla^k (\rho-1)(t)\|_{H^{N-k}}
 +\|\nabla^k u (t)\|_{H^{N-k}} \le C (1+t)^{-\frac{3+2k}{4}},\\
&\|\nabla^l (d-w_0)(t)\|_{L^2} \le C (1+t)^{-\frac{3+2l}{4}},
\end{aligned}
\end{equation}
where $k=0,1,...,N-1$ and $l=0,1,2,...,N+1$.
\end{theo}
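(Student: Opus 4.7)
The plan is to linearize the system around the constant equilibrium $(1,0,w_0)$ by setting $\varrho:=\rho-1$ and $n:=d-w_0$. The linear part then decouples into a compressible Navier--Stokes type operator acting on $(\varrho,u)$ and a pure heat semigroup acting on $n$, while all genuinely nonlinear terms (namely $\mathrm{div}(\varrho u)$, the convective and viscous correction terms arising from $(\rho u)_t+\mathrm{div}(\rho u\otimes u)$, the coupling $\nabla d\cdot\Delta d$, and the supercritical term $|\nabla d|^{2}d$) can be grouped into a source $S(t)$ which, thanks to Theorem \ref{Global-existence}, is globally well controlled in $H^{N-1}$. I would first record the standard spectral estimates for the two semigroups: if $U_{0}\in L^{1}\cap H^{N}$ then the linearized Navier--Stokes evolution obeys $\|\nabla^{k}e^{t\mathcal{L}}U_{0}\|_{L^{2}}\lm (1+t)^{-3/4-k/2}(\|U_{0}\|_{L^{1}}+\|\nabla^{k}U_{0}\|_{L^{2}})$, and analogously for $e^{t\Delta}$ acting on $n$.

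Next, I would insert the Duhamel formula
\[
(\varrho,u,n)(t)=e^{-t\mathcal{L}_{\mathrm{full}}}(\varrho_{0},u_{0},n_{0})+\int_{0}^{t}e^{-(t-\tau)\mathcal{L}_{\mathrm{full}}}S(\tau)\,d\tau,
\]
and run a bootstrap. Define $\mathcal{M}(t):=\sup_{0\le s\le t}(1+s)^{3/4}\|(\varrho,u,n)(s)\|_{L^{2}}$, assume a provisional rate for $\mathcal{M}$, estimate $\|S(\tau)\|_{L^{1}\cap L^{2}}$ by combining the ansatz with the uniform $H^{3}$ bound from Theorem \ref{Global-existence} and Sobolev embeddings, and close the loop. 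This yields the $k=0$ case of \eqref{1.8} for $(\varrho,u)$ and the $l=0,1$ cases for $n$; the step from $n$ to $\nabla n$ is direct because the $\nabla$ can be absorbed into the heat kernel. The subtle point here is that $\|n_{0}\|_{L^{1}}$ alone is assumed (not $\|\nabla n_{0}\|_{L^{1}}$), so for $\nabla n$ one must pay a derivative cost that is precisely balanced by the heat decay.

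For higher-order derivatives ($k\ge 1$), I would switch to the energy plus Fourier-splitting scheme. Starting from the already established differential inequalities of the form
\[
\frac{d}{dt}\|\nabla^{k}(\varrho,u,n)\|_{L^{2}}^{2}+c\,\|\nabla^{k+1}(u,n)\|_{L^{2}}^{2}+c\,\|\nabla^{k+1}\varrho\|_{L^{2}}^{2}\lm \text{(nonlinear lower-order)},
\]
which follow from the global existence argument, I would multiply by $(1+t)^{m}$ with $m$ chosen according to the target rate and split the dissipation via $\mathbb{R}^{3}=\{|\xi|\le (C_{0}/(1+t))^{1/2}\}\cup\{|\xi|>(C_{0}/(1+t))^{1/2}\}$. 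On the high-frequency side one recovers $\|\nabla^{k}U\|_{L^{2}}^{2}$ with the desired $(1+t)^{-1}$ factor; on the low-frequency side one uses $\|\widehat{U}(t,\xi)\|_{L^{\infty}}\lm \|U_{0}\|_{L^{1}}+\int_{0}^{t}\|S(\tau)\|_{L^{1}}d\tau$, which stays bounded because the already proven lower-order decay forces $S\in L^{1}_{t}L^{1}_{x}$. Induction on $k$ (and separately on $l$ for the director, which can go one step higher thanks to the structure of its equation) then delivers \eqref{1.8} up to $k=N-1$, $l=N+1$.

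The main obstacle I expect is controlling the supercritical nonlinearity $|\nabla d|^{2}d$ and the quasilinear coupling $\nabla d\cdot\Delta d$ when closing the Fourier-splitting induction: these terms have too many derivatives at the top level to be absorbed directly, so one must carefully distribute derivatives, use the extra regularity of $n$ (one more derivative than $\varrho,u$), and exploit the interpolation inequality $\|\nabla^{k}f\|_{L^{2}}\le \|\nabla^{k-1}f\|_{L^{2}}^{1-\theta}\|\nabla^{k+1}f\|_{L^{2}}^{\theta}$ to trade an unbounded top-order term against the good dissipative norm on the left side. A secondary but persistent nuisance is propagating the $L^{1}$-Fourier bound on $(\varrho,u,n)$; this cannot be done by $L^{1}$ energy estimates but rather by checking that $S(\tau)\in L^{1}_{\tau}L^{1}_{x}$ with the preliminary decay rates, which is why the argument must be organized as an induction from low to high $k$.
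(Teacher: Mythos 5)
Your proposal is correct and follows the same overall architecture as the paper: pass to the perturbation $(\varrho,u,n)=(\rho-1,u,d-w_0)$, exploit that the linearization decouples into a compressible Navier--Stokes block and a heat equation, get the low-order rates by Duhamel plus a bootstrap on a time-weighted sup (the paper's Lemma \ref{lemma2.5}, which in fact delivers $k=0$ \emph{and} $k=1$ in one stroke by feeding the Duhamel bound into the Gronwall inequality for the energy functional $\mathcal{F}^N_1$), and then climb in $k$ by an energy/Fourier-splitting induction, with a separate argument for the orders $N$ and $N+1$ of $n$ (the paper's Lemma \ref{lemma2.7}). The one substantive place where you diverge is inside the Fourier-splitting step: you propose the classical Schonbek treatment of the low-frequency ball, bounding $\|\widehat{U}(t,\cdot)\|_{L^\infty_\xi}$ by $\|U_0\|_{L^1}+\int_0^t\|S(\tau)\|_{L^1}\,d\tau$ and checking $S\in L^1_tL^1_x$ from the already-proven rates, whereas the paper never propagates a uniform Fourier bound: it uses the two-sided Plancherel inequality
\begin{equation*}
\|\nabla^{k+2}\varrho\|_{L^2}^2\ \ge\ \frac{R}{1+t}\,\|\nabla^{k+1}\varrho\|_{L^2}^2-\frac{R^2}{(1+t)^2}\,\|\nabla^{k}\varrho\|_{L^2}^2,
\end{equation*}
so the low-frequency contribution is traded for the decay of the next-lower derivative, which is exactly the induction hypothesis. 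Both mechanisms close; your version needs the extra (true, but to-be-verified) integrability $\|S(\tau)\|_{L^1}\lesssim(1+\tau)^{-5/4}\in L^1_\tau$, while the paper's version needs nothing beyond the previous induction step and is why the argument there is organized around the functional $\mathcal{F}^N_{l+1}$ and the choice $R=2(l+3)C_8/C_7$. Your identification of the genuine difficulties (the dissipation for $\varrho$ existing only through the cross term $\int\nabla^k u\cdot\nabla^{k+1}\varrho\,dx$, the absence of $\|\nabla n_0\|_{L^1}$, and the top-order handling of $\nabla d\cdot\Delta d$ and $|\nabla d|^2d$ by derivative distribution and interpolation against the dissipative norm) matches what the paper actually does in Lemmas \ref{lemma2.4}, \ref{lemma2.6} and \ref{lemma2.7}.
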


\begin{rema}\label{remark1.4}
For any $2\le p \le 6$, by virtue of Theorem \ref{Decay1} and Sobolev interpolation inequality,
we also obtain the following time decay rates:
\begin{equation*}
\begin{aligned}
&\|\nabla^k (\rho-1)(t)\|_{L^p}+\|\nabla^k u(t)\|_{L^p}
 \le C (1+t)^{-\frac{3}{2}\left(1-\frac{1}{p}\right)-\frac{k}{2}},\\
&\|\nabla^l (d-w_0)(t)\|_{L^p} \le C (1+t)^{-\frac{3}{2}\left(1-\frac{1}{p}\right)-\frac{l}{2}},
\end{aligned}
\end{equation*}
where $k=0,1,...,N-2$ and $l=0,1,2,...,N$. Furthermore, it is easy to get the
convergence rates,
\begin{equation*}
\begin{aligned}
&\|\nabla^k (\rho-1)(t)\|_{L^\infty}+\|\nabla^k u(t)\|_{L^\infty}
 \le C (1+t)^{-\frac{3+k}{2}},\\
&\|\nabla^l (d-w_0)(t)\|_{L^\infty} \le C (1+t)^{-\frac{3+l}{2}},
\end{aligned}
\end{equation*}
where $k=0,1,...,N-3$ and $l=0,1,2...,N-1$.
\end{rema}

\begin{rema}
Generally speaking, the application of spectral analysis and energy estimates only helps the higher-order
spatial derivatives of solution obtain the same decay rates as the first-order spatial derivatives of solution when we deal with the compressible Navier-Stokes equations. In order to improve the convergence
rates for the higher-order spatial derivatives of solution, Guo and Wang \cite{Guo-Wang} developed
a general energy method when the initial data belongs to some negative Sobolev space
additionally. In this paper, we apply the Fourier splitting method by Schonbek \cite{Schonbek1} or \cite{Schonbek2} to improve the convergence rates for the higher-order spatial derivatives of solution.
The advantage of our results \eqref{1.8} is to verify that \eqref{Guo} holds on for the case $s=\frac{3}{2}$. Furthermore, the time decay rates in \eqref{1.8}
\begin{equation*}
\|\nabla^{N+1} (d-w_0)(t)\|_{L^2} \le C (1+t)^{-\frac{5+2N}{4}},
\end{equation*}
is completely new.
\end{rema}

Finally, we turn to study the last results concerning the time decay rates for the mixed space-time derivatives
to the compressible nematic liquid crystal flows \eqref{1.1}-\eqref{1.4}.
\begin{theo}\label{Decay2}
Under all the assumptions of Theorem \ref{Decay1}, then the global solution $(\rho, u, d)$ of problem
\eqref{1.1}-\eqref{1.4} has the time decay rates:
\begin{equation}\label{1.9}
\begin{aligned}
&\|\nabla^k \rho_t(t)\|_{H^{N-1-k}}+\|\nabla^k u_t(t)\|_{L^2}
\le C(1+t)^{-\frac{5+2k}{4}},\\
&\|\nabla^l d_t(t)\|_{L^2}\le C(1+t)^{-\frac{7+2l}{4}}\\
\end{aligned}
\end{equation}
for $k=0,1,...,N-2$ and $l=0,1,...,N-1$.
\end{theo}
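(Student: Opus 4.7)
The plan is to eliminate $\partial_t$ directly through the three equations of system \eqref{1.1}, converting each mixed space-time derivative into a sum of purely spatial quantities whose decay is already quantified by Theorem~\ref{Decay1}. Nonlinear contributions will be handled by standard Moser-type product inequalities, Sobolev embeddings, and the uniform bound \eqref{1.7}.

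For the density, I would use the continuity equation in the form $\rho_t = -\mathrm{div}\,u - \mathrm{div}\bigl((\rho-1)u\bigr)$. Applying $\nabla^k$ and measuring in $H^{N-1-k}$, the principal contribution is $\|\nabla u\|_{H^{N-k}}$, whose lowest-order piece $\|\nabla^{k+1} u\|_{L^2}$ is bounded by $(1+t)^{-(5+2k)/4}$ via \eqref{1.8}, while the higher-order pieces decay strictly faster and thus are absorbed. The quadratic remainder $\mathrm{div}\bigl((\rho-1)u\bigr)$ is treated by distributing derivatives and placing one factor in $L^\infty$ using Remark~\ref{remark1.4}; its decay rate turns out to be strictly better than the target.

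For the velocity I would first divide the momentum equation by $\rho$, which is uniformly bounded away from zero thanks to \eqref{1.7} and the embedding $H^2\hookrightarrow L^\infty$, to obtain
\[
u_t = -u\cdot\nabla u + \rho^{-1}\bigl[\mu\Delta u + (\mu+\nu)\nabla\mathrm{div}\,u - \nabla P(\rho) - \nabla d\cdot\Delta d\bigr],
\]
and then estimate $\nabla^k u_t$ in $L^2$ for $0\le k\le N-2$. The two dominant linear terms give $\|\nabla^{k+1}\rho\|_{L^2}\lesssim (1+t)^{-(5+2k)/4}$ and $\|\nabla^{k+2} u\|_{L^2}\lesssim (1+t)^{-(7+2k)/4}$, so the stated rate is dictated by the pressure gradient, while the convection and director forcing terms are subdominant by similar product estimates. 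For the director, I would use $d_t = -u\cdot\nabla d + \Delta d + |\nabla d|^2 d$: the Laplacian term supplies $\|\nabla^{l+2} d\|_{L^2}\lesssim (1+t)^{-(7+2l)/4}$ (admissible since $l+2\le N+1$), and this is exactly the claimed rate; the transport and supercritical contributions decay strictly faster.

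The only real technical nuisance I expect concerns the careful bookkeeping of the composition $\nabla^k(\rho^{-1})$ and of the cubic term $|\nabla d|^2 d$. For the former, a Fa\`a di Bruno--type expansion together with $\|\rho-1\|_{L^\infty}\ll 1$ reduces everything to spatial derivatives of $\rho-1$ already controlled by Theorem~\ref{Decay1}. For the latter, combining the $L^\infty$ decay of $\nabla d$ from Remark~\ref{remark1.4} with the $L^2$ decay of $\nabla d$ and the uniform bound on $d$ (inherited from $d-w_0\in H^3\hookrightarrow L^\infty$) yields a rate strictly better than $(1+t)^{-(7+2l)/4}$. Once this bookkeeping is in place, the entire proof reduces to an index-matching exercise against \eqref{1.8}.
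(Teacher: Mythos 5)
Your proposal is correct and follows essentially the same route as the paper: Section 3 likewise solves each equation of the perturbed system for the time derivative (with $g(\varrho)=1/(\varrho+1)$ playing the role of your $\rho^{-1}$), applies $\nabla^k$, and bounds every resulting spatial term by the decay rates of Theorem \ref{Decay1} via Leibniz, H\"older and Young inequalities. The only cosmetic difference is that the paper tests against $\nabla^k\varrho_t$, $\nabla^k u_t$, $\nabla^k n_t$ and absorbs the $\varepsilon$-terms rather than taking norms of the solved-for expression directly.
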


This paper is organized as follows. Since the proof of global existence of solution is standard, we only
sketch it in section $4$ as an appendix.  In section $2$, we establish the time convergence rates for
the density, velocity and direction field to
the compressible nematic liquid crystal flows \eqref{1.1}-\eqref{1.4}. More precisely, the decay rates
are built by the method of spectral analysis, energy estimates and Fourier splitting method.
In section $3$, we also study the time decay rates for mixed space-time derivatives of density, velocity
and director.

\section{Proof of Theorem \ref{Decay1}}

\quad In this section, we investigate the time decay rates for the compressible nematic liquid crystal flows
\eqref{1.1}-\eqref{1.4} when the initial data belongs to $L^1$ space additionally.
First of all, we derive the decay rates for the linearized compressible nematic liquid crystal flows
that are a coupling of linearized compressible Navier-Stokes equations and heat equations.
Secondly, we establish the decay rates for the compressible nematic liquid crystal flows
\eqref{1.1}-\eqref{1.4} by the method of spectral analysis and energy estimates. Furthermore, we improve the decay rates
for higher-order spatial derivatives of density, velocity and direction field.

Denoting $\varrho=\rho-1$ and $n=d-w_0$, then we rewrite \eqref{1.1} in the perturbation form as
\begin{equation}\label{2.1}
\left\{
\begin{aligned}
& \varrho_t+{\rm div}u=S_1,\\
& u_t-\mu \Delta u-(\mu+\nu)\nabla {\rm div}u+\nabla \varrho=S_2,\\
& n_t-\Delta n=S_3.
\end{aligned}
\right.
\end{equation}
Here $S_i(i=1,2,3)$ are defined as
\begin{equation}\label{2.2}
\left\{
\begin{aligned}
& S_1=-\varrho{\rm div}u-u \cdot \nabla \varrho,\\
& S_2=-u \cdot \nabla u-h(\varrho)[\mu\Delta u+(\mu+\nu)\nabla {\rm div}u]
-f(\varrho)\nabla \varrho -g(\varrho)\nabla n \cdot \Delta n,\\
& S_3=-u\cdot \nabla n+|\nabla n|^2 (n+w_0),
\end{aligned}
\right.
\end{equation}
where the three nonlinear functions of $\varrho$ are defined by
\begin{equation}\label{2.3}
h(\varrho):=\frac{\varrho}{\varrho+1},\quad  f(\varrho):=\frac{P'(\varrho+1)}{\varrho+1}-1 \quad {\text {and}} \quad
g(\varrho):=\frac{1}{\varrho+1}.
\end{equation}
The associated initial condition is given by
\begin{equation}\label{2.4}
\left.(\varrho, u, n)\right|_{t=0}=(\varrho_0, u_0, n_0).
\end{equation}
By virtue of \eqref{1.7} and Sobolev inequality, it is easy to get
\begin{equation*}
\frac{1}{2}\le \varrho+1 \le \frac{3}{2}.
\end{equation*}
Hence, we immediately have
\begin{equation}\label{2.5}
|h(\varrho)|, |f(\varrho)| \le C|\varrho| \quad  {\text{and}} \quad |g^{k-1}(\varrho)|, |h^k(\varrho)|, |f^k(\varrho)| \le C \quad {\text{for any}} \quad k \ge 1,
\end{equation}
which we will use frequently to derive the a priori estimates for the time decay rates.

Now, we state the classical Sobolev interpolation of the Gagliardo-Nirenberg inequality, refer to \cite{Nirenberg}.
\begin{lemm}\label{lemma2.1}
Let $0 \le m, \alpha \le l$ and the function $f\in C_0^\infty(\mathbb{R}^3)$, then we have
\begin{equation}\label{GN}
\|\nabla^\alpha f\|_{L^p} \lesssim \|\nabla^m f \|_{L^2}^{1-\theta} \|\nabla^l f \|_{L^2}^\theta,
\end{equation}
where $0\le \theta \le 1$ and $\alpha$ satisfy
\begin{equation*}
\frac{1}{p}-\frac{\alpha}{3}=\left(\frac{1}{2}-\frac{m}{3}\right)(1-\theta)
+\left(\frac{1}{2}-\frac{l}{3}\right)\theta.
\end{equation*}
\end{lemm}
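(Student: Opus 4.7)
The lemma is the classical Gagliardo--Nirenberg interpolation inequality, with the mild twist that $\alpha$ may be fractional (interpreted as $\Lambda^{\alpha}$ per the paper's convention). My plan is to prove it in two stages: first handle the base case $p=2$ by a Fourier-side H\"older argument, and then lift to general $p$ via a Sobolev embedding. The scaling relation in the statement is exactly the one dictated by the rescaling $f(x)\mapsto f(\lambda x)$, which is what makes such a clean interpolation possible; verifying this scaling identity at the outset guides the choice of auxiliary exponents in what follows.

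For $p=2$, the scaling condition forces $\alpha=(1-\theta)m+\theta l$. By Plancherel,
$$
\|\nabla^{\alpha}f\|_{L^{2}}^{2}=\int_{\mathbb{R}^{3}}|\xi|^{2\alpha}|\hat f(\xi)|^{2}\,d\xi,
$$
and writing $|\xi|^{2\alpha}=(|\xi|^{2m})^{1-\theta}(|\xi|^{2l})^{\theta}$ and applying H\"older with conjugate exponents $1/(1-\theta)$ and $1/\theta$ yields
$$
\|\nabla^{\alpha}f\|_{L^{2}}^{2}\;\leq\;\left(\int|\xi|^{2m}|\hat f|^{2}d\xi\right)^{1-\theta}\left(\int|\xi|^{2l}|\hat f|^{2}d\xi\right)^{\theta}=\|\nabla^{m}f\|_{L^{2}}^{2(1-\theta)}\|\nabla^{l}f\|_{L^{2}}^{2\theta}.
$$

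To pass to general $p$, I would set $\beta:=\alpha+3(\tfrac{1}{2}-\tfrac{1}{p})$ and check that the scaling hypothesis is equivalent to $\beta=(1-\theta)m+\theta l$. For $p\in[2,\infty)$, the Sobolev embedding $\|\nabla^{\alpha}f\|_{L^{p}}\lesssim\|\nabla^{\beta}f\|_{L^{2}}$---a consequence of the Hardy--Littlewood--Sobolev inequality applied to the Riesz potential $\Lambda^{-(\beta-\alpha)}$---reduces matters to the $L^{2}$ inequality of the previous step, applied now at the exponent $\beta$. For $p\in[1,2)$ (where $\beta<\alpha$) one argues by duality, or alternatively via a Littlewood--Paley decomposition $f=\sum_{j}\Delta_{j}f$ combined with Bernstein's inequality on each dyadic block and summation in $j$ using the scaling identity to make the dyadic weights $2^{j[\alpha+3(1/2-1/p)-(1-\theta)m-\theta l]}=1$.

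The main obstacle is the fractional case. For integer indices $m,\alpha,l$ one can avoid Fourier analysis altogether, using the standard Sobolev inequality together with repeated integration by parts and Young's inequality. But when any of $m,\alpha,l$ is non-integer, one genuinely needs Hardy--Littlewood--Sobolev to justify the Riesz potential bound $\|\Lambda^{-(\beta-\alpha)}g\|_{L^{p}}\lesssim\|g\|_{L^{2}}$, which is the nontrivial technical input; everything else is elementary. Since the inequality is the classical theorem of Nirenberg, in the paper itself I would simply invoke \cite{Nirenberg} rather than reproduce the argument, which is the same route the authors take.
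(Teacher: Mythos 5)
The paper offers no proof of this lemma at all --- it is quoted as the classical Gagliardo--Nirenberg inequality with a bare citation to \cite{Nirenberg} --- so there is no in-paper argument to compare yours against; it can only be checked on its own terms. Where it matters, your argument is sound: for $p=2$ the Plancherel--H\"older computation is exactly right, and for $2\le p<\infty$ the reduction $\|\nabla^{\alpha}f\|_{L^{p}}\lesssim\|\nabla^{\beta}f\|_{L^{2}}$ with $\beta=\alpha+3(\tfrac{1}{2}-\tfrac{1}{p})$ via the Riesz potential and Hardy--Littlewood--Sobolev, followed by the $L^{2}$ case at exponent $\beta$, is the standard route and is correct.

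Two of your branches, however, have genuine problems. First, the range $p\in[1,2)$ cannot be rescued by duality or by Littlewood--Paley, because the inequality is simply false there: take $f$ to be a sum of $N$ widely separated translates of a fixed bump $\phi$, and take $m=0$, $\alpha=l=1$, $\theta=\tfrac{1}{2}$, $p=\tfrac{3}{2}$ (which satisfies the scaling relation); then $\|\nabla f\|_{L^{3/2}}\sim N^{2/3}$ while $\|f\|_{L^{2}}^{1/2}\|\nabla f\|_{L^{2}}^{1/2}\sim N^{1/2}$, so the ratio diverges as $N\to\infty$. The lemma is true only for $2\le p\le\infty$ (which is the implicit convention here and the only range the paper ever uses), and you should state that restriction rather than attempt the sub-$L^{2}$ range. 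Second, the case $p=\infty$ --- which the paper does use, e.g.\ for $\|\nabla n\|_{L^{\infty}}$ in the proof of Lemma \ref{lemma2.7} and for the rates in Remark \ref{remark1.4} --- falls outside both of your branches, and your HLS reduction breaks down there because $\dot{H}^{3/2}(\mathbb{R}^{3})$ does not embed into $L^{\infty}$. For $p=\infty$ one needs a two-sided Bernstein/Littlewood--Paley summation (an Agmon-type argument), which converges precisely because $\theta$ stays strictly between the endpoint values; this borderline restriction should accompany the statement as well.
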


On the other hand, the following lemma is very useful when we deal with the nonlinear function of $\varrho$,
refer to \cite{Wang-Yan-Jin}.
\begin{lemm}\label{lemma2.2}
Assume that $\|\varrho\|_{L^\infty}\le 1$. Let $g(\varrho)$ be a smooth function of $\varrho$ with
bounded derivatives of any order, then for any integer $m \ge 1$ we have
\begin{equation}\label{2.7}
\|\nabla^m (g(\varrho))\|_{L^\infty} \lesssim \|\nabla^m \varrho\|_{L^\infty}.
\end{equation}
\end{lemm}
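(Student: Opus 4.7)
The plan is to combine the Faà di Bruno chain-rule expansion of $\nabla^m(g(\varrho))$ with an $L^\infty$ Gagliardo--Nirenberg interpolation, then use the hypothesis $\|\varrho\|_{L^\infty}\le 1$ to absorb the lower-order factors. Since $m$ is fixed and finite, the whole argument is a finite sum of pointwise estimates, so no subtle analysis is needed; the work is just organizing the combinatorics.

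First I would write the multi-index Faà di Bruno formula: for any multi-index $\alpha$ with $|\alpha|=m$,
\[
\partial^\alpha g(\varrho) \;=\; \sum_{k=1}^{m}\, g^{(k)}(\varrho)
\sum_{\substack{\beta_1+\cdots+\beta_k=\alpha \\ |\beta_i|\ge 1}} c_{\alpha,\beta_1,\ldots,\beta_k}\,
\prod_{i=1}^{k}\partial^{\beta_i}\varrho,
\]
where the $c_{\alpha,\beta_1,\ldots,\beta_k}$ are universal combinatorial constants. Because $g$ has bounded derivatives of every order and $\|\varrho\|_{L^\infty}\le 1$, we have $|g^{(k)}(\varrho(x))|\le C_k$ pointwise for each $k\le m$, so taking $L^\infty$ norms reduces the problem to estimating each product
$\prod_{i=1}^{k}\|\nabla^{m_i}\varrho\|_{L^\infty}$ with $m_i:=|\beta_i|\ge 1$ and $\sum_i m_i=m$.

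Next I would apply the $L^\infty$--$L^\infty$ Gagliardo--Nirenberg interpolation
\[
\|\nabla^{m_i}\varrho\|_{L^\infty}\;\lesssim\; \|\varrho\|_{L^\infty}^{1-m_i/m}\,\|\nabla^{m}\varrho\|_{L^\infty}^{m_i/m},
\qquad 1\le m_i\le m,
\]
which is a standard consequence of the Gagliardo--Nirenberg--Nirenberg scaling identity (this is the $p=q=r=\infty$, $\theta=m_i/m$ case, valid on $\mathbb{R}^3$ and easily verified by the Nirenberg scaling argument or by mollification; I would briefly note that Lemma \ref{lemma2.1} is only stated in the $L^2$ setting in the paper, so this $L^\infty$ variant should be cited to Nirenberg). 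Taking the product and using $\sum_{i=1}^{k} m_i/m = 1$, the exponent of $\|\nabla^m\varrho\|_{L^\infty}$ is $1$ and the exponent of $\|\varrho\|_{L^\infty}$ is $k-1\ge 0$; since $\|\varrho\|_{L^\infty}\le 1$, we conclude
\[
\prod_{i=1}^{k}\|\nabla^{m_i}\varrho\|_{L^\infty} \;\lesssim\; \|\nabla^m\varrho\|_{L^\infty}.
\]
Summing over the finitely many terms in the Faà di Bruno expansion gives \eqref{2.7}.

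The argument has no real analytic obstacle; the only delicate point is the $L^\infty$ interpolation, because the paper only records the $L^2$-based Gagliardo--Nirenberg inequality. If one prefers to avoid quoting the $L^\infty$ version, an alternative plan is an induction on $m$: the base case $m=1$ is immediate from $\nabla g(\varrho)=g'(\varrho)\nabla\varrho$, and for the inductive step one differentiates a representative term $g^{(k)}(\varrho)\prod\nabla^{m_i}\varrho$ once more and reuses the product-rule together with the assumption $\|\varrho\|_{L^\infty}\le 1$ to absorb a factor of $\|\nabla\varrho\|_{L^\infty}\le \|\nabla^m\varrho\|_{L^\infty}^{1/m}$, whichever route is shortest in context.
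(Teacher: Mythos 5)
Your proof is correct. Note that the paper itself offers no proof of this lemma --- it is simply quoted from \cite{Wang-Yan-Jin} --- and your Fa\`a di Bruno plus $L^\infty$ Gagliardo--Nirenberg argument is exactly the standard one used there. The only delicate point you rightly flag, the validity of $\|\nabla^{m_i}\varrho\|_{L^\infty}\lesssim\|\varrho\|_{L^\infty}^{1-m_i/m}\|\nabla^m\varrho\|_{L^\infty}^{m_i/m}$, is unproblematic: the exceptional case of the Gagliardo--Nirenberg theorem ($\theta=1$ with $m-m_i-n/r$ a nonnegative integer) occurs only when $m_i=m$, i.e.\ $k=1$, where no interpolation is needed since the single factor is already $\|\nabla^m\varrho\|_{L^\infty}$; for $1\le m_i<m$ one has $\theta=m_i/m<1$ and the inequality holds. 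The bookkeeping $\sum_i m_i/m=1$ and the absorption of $\|\varrho\|_{L^\infty}^{k-1}\le 1$ then close the estimate as you describe.
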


\subsection{Decay rates for the nonlinear systems}

\quad First of all, let us to consider the following linearized compressible nematic liquid crystal systems
\begin{equation}\label{2.8}
\left\{
\begin{aligned}
& \varrho_t+{\rm div}u=0,\\
& u_t-\mu \Delta u-(\mu+\nu)\nabla {\rm div}u+\nabla \varrho=0,\\
& n_t-\Delta n=0,
\end{aligned}
\right.
\end{equation}
with the initial data
\begin{equation}\label{2.9}
\left.(\varrho, u, n)\right|_{t=0}=(\varrho_0, u_0, n_0).
\end{equation}
Obviously, the solution $(\varrho, u, n)$ of the linear problem \eqref{2.8}-\eqref{2.9} can be expressed as
\begin{equation}\label{2.10}
(\varrho, u, n)^{tr}=G(t)*(\varrho_0, u_0, n_0)^{tr}, t \ge 0.
\end{equation}
Here $G(t):=G(x,t)$ is the Green matrix for the system \eqref{2.8}
and the exact expression of the Fourier transform $\hat{G}(\xi, t)$ of Green function $G(x,t)$ as
\begin{equation*}
\hat{G}(\xi, t)=
\left[
\begin{array}{ccc}
\frac{\lambda_+ e^{\lambda_- t}-\lambda_- e^{\lambda_+ t}}{\lambda_+ -\lambda_-}
& \frac{-i \xi^t (e^{\lambda_+ t}-e^{\lambda_- t})}{\lambda_+ -\lambda_-}
& 0\\
 \frac{-i \xi (e^{\lambda_+ t}-e^{\lambda_- t})}{\lambda_+ -\lambda_-}
& \frac{\lambda_+ e^{\lambda_+ t}-\lambda_- e^{\lambda_- t}}{\lambda_+ -\lambda_-} \frac{\xi \xi^t}{|\xi|^2}
  +e^{\lambda_0 t}\left(I_{3\times 3}- \frac{\xi \xi^t}{|\xi|^2}\right)
&0 \\
0
&0
&e^{\lambda_1 t}I_{3\times 3}\\
\end{array}
\right]
\end{equation*}
where
\begin{equation*}
\begin{aligned}
&\lambda_0 =-\mu |\xi|^2,\quad \lambda_1 =-|\xi|^2,\\
&\lambda_+ =-\left(\mu+\frac{1}{2}\nu\right)|\xi|^2+ i \sqrt{|\xi|^2-\left(\mu+\frac{1}{2}\nu\right)^2|\xi|^4},\\
&\lambda_- =-\left(\mu+\frac{1}{2}\nu\right)|\xi|^2- i \sqrt{|\xi|^2-\left(\mu+\frac{1}{2}\nu\right)^2|\xi|^4}.\\
\end{aligned}
\end{equation*}
Since the systems \eqref{2.8} is a decoupled system of the classical linearized Navier-Stokes equations
and heat equations,
the representation of Green function $\hat{G}(\xi, t)$ is easy to verify. Furthermore, we have the following
decay rates for the linearized systems \eqref{2.8}-\eqref{2.9}, refer to \cite{{Hu-Wu}}.

\begin{prop}\label{proposition2.3}
Let $N\ge 3$ be an integer. Assume that $(\varrho, u, n)$ is the solution of the linearized compressible nematic liquid
crystal system \eqref{2.8}-\eqref{2.9} with the initial data $(\varrho_0, u_0, n_0)\in H^N \cap L^1$, then
\begin{equation*}
\begin{aligned}
&\|\nabla^k \varrho\|_{L^2}^2\le C\left(\|(\varrho_0, u_0)\|_{L^1}^2+\|\nabla^k(\varrho_0, u_0)\|_{L^2}^2\right)
 (1+t)^{-\frac{3}{2}-k},\\
&\|\nabla^k u\|_{L^2}^2\le C\left(\|(\varrho_0, u_0)\|_{L^1}^2+\|\nabla^k(\varrho_0, u_0)\|_{L^2}^2\right)
 (1+t)^{-\frac{3}{2}-k},\\
&\|\nabla^k n\|_{L^2}^2\le C\left(\|n_0\|_{L^1}^2+\|\nabla^k n_0\|_{L^2}^2\right)
 (1+t)^{-\frac{3}{2}-k}\\
\end{aligned}
\end{equation*}
for $0\le k \le N$.
\end{prop}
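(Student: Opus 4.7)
The plan is to exploit the decoupled structure of \eqref{2.8}: the first two equations form the classical linearized compressible Navier--Stokes system for $(\varrho,u)$, while the third is a pure heat equation for $n$, as is already visible from the block-diagonal form of $\hat G(\xi,t)$. Hence I would treat the director part and the fluid part separately and then just add the estimates.

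For the heat part, $\hat n(\xi,t)=e^{-|\xi|^2 t}\hat n_0(\xi)$ and Plancherel gives
\begin{equation*}
\|\nabla^k n(t)\|_{L^2}^2=\int_{\mathbb{R}^3}|\xi|^{2k}e^{-2|\xi|^2 t}|\hat n_0(\xi)|^2\,d\xi.
\end{equation*}
I would split this integral at the unit ball. On $\{|\xi|\le 1\}$ use Hausdorff--Young, $|\hat n_0(\xi)|\le\|n_0\|_{L^1}$, and the elementary computation $\int_{|\xi|\le 1}|\xi|^{2k}e^{-2|\xi|^2 t}\,d\xi\lesssim (1+t)^{-3/2-k}$; on $\{|\xi|>1\}$ pull out $e^{-2|\xi|^2 t}\le e^{-t}e^{-|\xi|^2 t}$, absorb the remaining Gaussian, and bound the rest by $e^{-t}\|\nabla^k n_0\|_{L^2}^2$. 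Combining the two regions yields the bound in the required form, since exponential decay dominates $(1+t)^{-3/2-k}$.

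For the fluid block I would apply the same low/high-frequency strategy to the explicit Fourier symbol displayed above. The scalar entries are built from $e^{\lambda_0 t}$, $e^{\lambda_\pm t}$ and the quotients $(\lambda_\pm e^{\lambda_\mp t}-\lambda_\mp e^{\lambda_\pm t})/(\lambda_+-\lambda_-)$ and $(e^{\lambda_+ t}-e^{\lambda_- t})/(\lambda_+-\lambda_-)$. Near $\xi=0$ one has $\operatorname{Re}\lambda_\pm=-(\mu+\nu/2)|\xi|^2$ with $\lambda_+-\lambda_-\sim 2i|\xi|$, so a direct expansion shows each entry of $\hat G$ is pointwise bounded by $C e^{-c|\xi|^2 t}$ on $\{|\xi|\le\eta\}$ for some small $\eta$; this gives the $(1+t)^{-3/2-k}$ rate multiplied by $\|(\varrho_0,u_0)\|_{L^1}^2$ exactly as in the heat case. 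On $\{|\xi|\ge\eta\}$, the two asymptotic regimes $\lambda_+\to -1/(2\mu+\nu)$ and $\lambda_-\sim -(2\mu+\nu)|\xi|^2$ guarantee $\operatorname{Re}\lambda_\pm\le -c<0$ uniformly, so $|\hat G(\xi,t)|\lesssim e^{-ct}$ and the high-frequency contribution is exponentially small times $\|\nabla^k(\varrho_0,u_0)\|_{L^2}^2$. Summing the two pieces with the heat estimate produces the stated inequality.

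The only genuinely delicate point is the uniform control of the off-diagonal quotients at the transition frequency $|\xi|_*=1/(\mu+\nu/2)$, where $\lambda_+=\lambda_-$ and the denominator of $\hat G$ vanishes. There the ratio must be read as the confluent limit $t\,e^{\lambda_\pm t}$, which is of size $te^{-ct}$ and hence uniformly bounded in $t\ge 0$; a Taylor expansion in a neighborhood of $|\xi|_*$ shows the numerator also vanishes to the same order, so the entries of $\hat G$ extend continuously and remain dominated by $e^{-ct}$ on any region bounded away from the origin. This removable-singularity check, together with the continuity in $\xi$, is the only step that requires more than bookkeeping; once it is in place, the proof reduces to the standard two-region split outlined above. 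Since \eqref{2.8} is a genuinely decoupled system and the Navier--Stokes block is exactly that analyzed in \cite{Matsumura-Nishida2,Ponce,Hu-Wu}, I would invoke those references for the detailed symbol estimates rather than reproduce them.
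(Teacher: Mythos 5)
Your proposal is correct and coincides with the route the paper itself relies on: the paper merely observes that \eqref{2.8} decouples into the classical linearized Navier--Stokes block plus a heat equation, writes down $\hat G(\xi,t)$, and then cites the literature for these decay rates without giving a proof. Your low/high-frequency split for the heat kernel and the standard spectral analysis of the fluid symbol (including the removable singularity where $\lambda_+=\lambda_-$) is exactly the argument behind that citation, so there is nothing to add or correct.
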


The following estimates are essential for us to establish the time decay rates by the method of Green function.
Since it is easy to derive, then we only state the results here for the brevity. To be precise, we have
\begin{equation}\label{2.11}
\begin{aligned}
\|(S_1, S_2, S_3)\|_{L^1}
&\le (\|\varrho\|_{L^2}+\|u\|_{L^2}+\|\nabla n\|_{L^2})(\|\nabla \varrho\|_{L^2}+\|\nabla u\|_{H^1}+\|\nabla n\|_{H^1})\\
&\lesssim \delta (\|\nabla \varrho\|_{L^2}+\|\nabla u\|_{H^1}+\|\nabla n\|_{H^1}),\\
\|(S_1, S_2, S_3)\|_{L^2}
&\le (\|\varrho\|_{H^1}+\|u\|_{H^1}+\|\nabla n\|_{H^1})(\|\nabla^2 \varrho\|_{L^2}+\|\nabla^2 u\|_{H^1}+\|\nabla^2 n\|_{H^1})\\
&\lesssim \delta (\|\nabla^2 \varrho\|_{L^2}+\|\nabla^2 u\|_{H^1}+\|\nabla^2 n\|_{H^1}),\\
\|\nabla (S_1, S_2, S_3)\|_{L^2}
&\le (\|\varrho\|_{H^2}+\|u\|_{H^2}+\|\nabla n\|_{H^2})(\|\nabla^2 \varrho\|_{H^1}+\|\nabla^2 u\|_{H^1}+\|\nabla n\|_{H^2})\\
&\lesssim \delta (\|\nabla^2 \varrho\|_{H^1}+\|\nabla^2 u\|_{H^1}+\|\nabla n\|_{H^2}).\\
\end{aligned}
\end{equation}

Before studying the time decay rates, the following energy estimates will be used to
guarantee the first-order derivatives of velocity and director enjoying the same convergence rates.

\begin{lemm}\label{lemma2.4}
Under the assumption \eqref{1.6}, then we have for any integer $k=0,1, 2, ..., N,$
\begin{equation}\label{241}
\begin{aligned}
\frac{d}{dt}\int |\nabla^k n|^2 dx+\int |\nabla^{k+1} n|^2 dx \lesssim \delta \|\nabla^{k+1} u\|_{L^2}^2.
\end{aligned}
\end{equation}
\end{lemm}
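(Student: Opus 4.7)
The plan is to carry out a standard weighted energy estimate for $\nabla^k n$ using the perturbed equation $n_t - \Delta n = S_3$ from \eqref{2.1}, and to close it using the smallness $\|(u,\nabla n)\|_{H^3}\lesssim \delta$ guaranteed by Theorem \ref{Global-existence}. First I apply $\nabla^k$ to the third equation of \eqref{2.1}, take $L^2$ inner product against $\nabla^k n$, and integrate by parts on the Laplacian to obtain
\begin{equation*}
\frac{1}{2}\frac{d}{dt}\|\nabla^k n\|_{L^2}^2 + \|\nabla^{k+1} n\|_{L^2}^2 = \int \nabla^k S_3 \cdot \nabla^k n\, dx, \qquad S_3 = -u\cdot\nabla n + |\nabla n|^2(n+w_0).
\end{equation*}
The remaining task is to show the right-hand side is bounded by $\tfrac{1}{2}\|\nabla^{k+1} n\|_{L^2}^2 + C\delta\|\nabla^{k+1} u\|_{L^2}^2$.

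For the transport contribution $-\int \nabla^k(u\cdot\nabla n)\cdot\nabla^k n\,dx$, I would split off the leading piece via $\nabla^k(u\cdot \nabla n)=u\cdot \nabla^{k+1} n+[\nabla^k,u\cdot\nabla]n$. The leading piece, after integration by parts, reduces to $\tfrac{1}{2}\int \mathrm{div}\,u\,|\nabla^k n|^2\,dx$, which is controlled by $\|\nabla u\|_{L^\infty}\|\nabla^k n\|_{L^2}^2\lesssim\delta\|\nabla^k n\|_{L^2}^2$ and then absorbed through Gagliardo-Nirenberg interpolation (Lemma \ref{lemma2.1}) into $\eta\|\nabla^{k+1} n\|_{L^2}^2+\delta\|\nabla^{k+1} u\|_{L^2}^2$. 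The commutator terms, of the form $\int \nabla^j u\cdot \nabla^{k-j+1}n\cdot\nabla^k n\,dx$ for $1\le j\le k$, are handled by Hölder in $L^{p_1}\!\times\! L^{p_2}\!\times\! L^{p_3}$ with exponents chosen so that Lemma \ref{lemma2.1} extracts one small factor from $\|(u,\nabla n)\|_{H^3}\lesssim\delta$ and the two remaining factors are either $\|\nabla^{k+1} u\|_{L^2}$ or $\|\nabla^{k+1} n\|_{L^2}$, allowing Young's inequality to deliver the desired bound.

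For the supercritical term $\int \nabla^k[|\nabla n|^2(n+w_0)]\cdot \nabla^k n\,dx$, I expand by Leibniz into a sum of products $\nabla^{j_1+1} n\cdot \nabla^{j_2+1} n\cdot \nabla^{j_3}(n+w_0)$ against $\nabla^k n$ with $j_1+j_2+j_3=k$; when $j_3=0$ the constant $w_0$ survives, which is the essential supercritical case. Again by Hölder and Lemma \ref{lemma2.1}, combined with $\|\nabla n\|_{H^{N-1}}\lesssim\delta$ and Lemma \ref{lemma2.2} to bound smooth functions of $\varrho$ (not needed here but available), every such term is controlled by $C\delta\|\nabla^{k+1} n\|_{L^2}^2$. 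No $u$-factor appears in this contribution, so it must be absorbed into the dissipation on the left, which is possible because $\delta$ is small. Combining the two estimates, choosing $\eta$ and $\delta$ small enough to absorb all $\|\nabla^{k+1} n\|_{L^2}^2$ terms, yields \eqref{241}.

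The main obstacle is exactly the supercritical $|\nabla n|^2 w_0$ piece, where neither the transport smallness nor any $u$-derivative is available, so the full burden of smallness must come from $\|\nabla n\|_{H^3}\lesssim\delta$; this forces careful choice of Gagliardo-Nirenberg exponents, particularly at the borderline $k=N$ where no derivative room above $N$ is left. A secondary subtlety is the top-order commutator in the transport term, where placing derivatives appropriately so that the small factor lands on the field with high regularity (rather than on the dissipation) is what allows one to pay exclusively with $\delta\|\nabla^{k+1} u\|_{L^2}^2$ on the right.
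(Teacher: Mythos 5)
Your overall strategy (energy identity, Leibniz expansion, H\"older and Gagliardo--Nirenberg, then absorption of $\delta\|\nabla^{k+1}n\|_{L^2}^2$ into the dissipation) is the same as the paper's, and your sketches for the commutator and for the supercritical term are workable in spirit. However, the step where you handle the leading transport piece fails as written. After reducing $-\int u\cdot\nabla^{k+1}n\cdot\nabla^k n\,dx$ to $\tfrac12\int \mathrm{div}\,u\,|\nabla^k n|^2\,dx$ and bounding it by $\delta\|\nabla^k n\|_{L^2}^2$, you claim this can be absorbed by interpolation into $\eta\|\nabla^{k+1}n\|_{L^2}^2+\delta\|\nabla^{k+1}u\|_{L^2}^2$. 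It cannot: the quantity $\delta\|\nabla^k n\|_{L^2}^2$ contains no $u$ at all, so no $\|\nabla^{k+1}u\|_{L^2}^2$ can arise from it, and on $\mathbb{R}^3$ there is no Poincar\'e inequality, so Gagliardo--Nirenberg only yields $\|\nabla^k n\|_{L^2}^2\le\|\nabla^{k-j}n\|_{L^2}^{2(1-\theta)}\|\nabla^{k+1}n\|_{L^2}^{2\theta}$ with $\theta<1$; Young's inequality then leaves a genuinely lower-order remainder (ultimately a positive constant of size a power of $\delta$), which is not permitted on the right-hand side of \eqref{241} and would destroy the decay argument that the lemma feeds into. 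The error is in collapsing the term to order $2k$ too early: once both factors sit strictly below the dissipation order $k+1$, no interpolation can climb back to $2(k+1)$ without a leftover.

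The paper sidesteps this term entirely by integrating by parts once \emph{before} applying Leibniz, writing $-\int\nabla^k(u\cdot\nabla n)\cdot\nabla^k n\,dx=\int\nabla^{k-1}(u\cdot\nabla n)\cdot\nabla^{k+1}n\,dx=\sum_{l=0}^{k-1}C_{k-1}^{l}\int\nabla^l u\,\nabla^{k-l}n\,\nabla^{k+1}n\,dx$, so that one factor sits exactly at the dissipation order $k+1$ and only the product $\|\nabla^l u\|_{L^3}\|\nabla^{k-l}n\|_{L^6}$ (of total order $k+\tfrac32$) needs to be split, by a joint two-sided interpolation, into one factor at order exactly $k+1$ (either $\|\nabla^{k+1}u\|_{L^2}$ or $\|\nabla^{k+1}n\|_{L^2}$) times a low-order factor of size $O(\delta)$; Young's inequality then closes with no remainder. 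If you insist on your decomposition, the local repair is to estimate $\int \mathrm{div}\,u\,|\nabla^k n|^2\,dx\le\|\nabla u\|_{L^3}\|\nabla^k n\|_{L^6}\|\nabla^k n\|_{L^2}\lesssim\|\nabla u\|_{L^3}\|\nabla^k n\|_{L^2}\|\nabla^{k+1}n\|_{L^2}$ and to interpolate the first two factors jointly so that their high ends land at $\|\nabla^{k+1}u\|_{L^2}^{\theta_1}\|\nabla^{k+1}n\|_{L^2}^{\theta_2}$ with $\theta_1+\theta_2=1$ --- i.e., never discard the extra half derivative of integrability. The same caution applies to the $j_3=0$ piece of your supercritical term when it is paired with $\nabla^k n$ rather than $\nabla^{k+1}n$: you must keep $\|\nabla^k n\|_{L^6}\lesssim\|\nabla^{k+1}n\|_{L^2}$ on that factor rather than measuring it in $L^2$.
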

\begin{proof}
Taking $k-$th spatial derivatives to $\eqref{2.1}_3$, multiplying the resulting equations by $\nabla^k n$
and integrating over $\mathbb{R}^3$(by part), it arrives at
\begin{equation}\label{242}
\begin{aligned}
&\frac{1}{2}\frac{d}{dt}\int |\nabla^k n|^2 dx+\int |\nabla^{k+1} n|^2 dx\\
&=\int \nabla^k \left[- u \cdot \nabla n+|\nabla n|^2 (n+w_0)\right]\cdot \nabla^k n \ dx
=I_1+I_2.
\end{aligned}
\end{equation}
Since the inequality \eqref{241} is easy to verify for the case $k=0$, then we only
verify the case $k \ge 1$.
Applying Leibnitz formula and Holder inequality, it is easy to deduce
\begin{equation}\label{243}
\begin{aligned}
I_1
&=\int\sum_{l=0}^{k-1} C_{k-1}^l \nabla^{l} u \nabla^{k-l}n\ \nabla^{k+1} n \ dx\\
&\lesssim \sum_{l=0}^{k-1} \|\nabla^{l} u\|_{L^3} \|\nabla^{k-l} n\|_{L^6} \|\nabla^{k+1} n \|_{L^2}.
\end{aligned}
\end{equation}
For the case $0 \le l \le \left[ \frac{k-1}{2}\right]$, by virtue of the interpolation inequality \eqref{GN}
and Young inequality, we have
\begin{equation}\label{244}
\begin{aligned}
&\quad \|\nabla^{l} u\|_{L^3} \|\nabla^{k-l}n\|_{L^6}\|\nabla^{k+1} n\|_{L^2}\\
&\lesssim \|\nabla^\alpha u\|_{L^2}^{1-\frac{l}{k}} \|\nabla^{k+1} u\|_{L^2}^{\frac{l}{k}}
          \|\nabla n\|_{L^2}^{\frac{l}{k}} \|\nabla^{k+1} n\|_{L^2}^{1-\frac{l}{k}}\|\nabla^{k+1} n\|_{L^2}\\
&\lesssim \delta (\|\nabla^{k+1}u\|_{L^2}^2+\|\nabla^{k+1} n\|_{L^2}^2),
\end{aligned}
\end{equation}
where $\alpha$ is defined
\begin{equation*}
\alpha=1-\frac{k}{2(k-l)}\in \left[0, \frac{1}{2}\right].
\end{equation*}
Similarly, for the case $\left[\frac{k-1}{2}\right]+1 \le l \le k-1,$ it follows that
\begin{equation}\label{245}
\begin{aligned}
&\quad \|\nabla^{l} u\|_{L^3} \|\nabla^{k-l}n\|_{L^6}\|\nabla^{k+1} n\|_{L^2}\\
&\lesssim \|u\|_{L^2}^{1-\frac{l+\frac{1}{2}}{k+1}} \|\nabla^{k+1} u\|_{L^2}^{\frac{l+\frac{1}{2}}{k+1}}
          \|\nabla^\alpha n\|_{L^2}^{\frac{l+\frac{1}{2}}{k+1}} \|\nabla^{k+1} n\|_{L^2}^{1-\frac{l+\frac{1}{2}}{k+1}}
          \|\nabla^{k+1} n\|_{L^2}\\
&\lesssim \delta (\|\nabla^{k+1}u\|_{L^2}^2+\|\nabla^{k+1} n\|_{L^2}^2),
\end{aligned}
\end{equation}
where $\alpha$ is defined by
\begin{equation*}
\alpha=\frac{k+1}{2l+1} \in \left(\frac{1}{2},1\right].
\end{equation*}
Indeed, the smallness of $\|\nabla^\alpha n\|(0 < \alpha \le 1)$ is guaranteed by the boundedness of
$\|n\|_{L^2}$ and the smallness of $\|\nabla n\|_{L^2}$.
Plugging \eqref{244} and \eqref{245} into \eqref{243}, we obtain
\begin{equation}\label{246}
I_1 \lesssim \delta (\|\nabla^{k+1} u\|_{L^2}^2+\|\nabla^{k+1} n\|_{L^2}^2).
\end{equation}
By virtue of Leibnitz formula and Holder inequality, it arrives at
\begin{equation}\label{247}
\begin{aligned}
I_2
& =-\int \nabla^{k-1}(|\nabla n|^2 (n+w_0))\nabla^{k+1}  n \ dx\\
& =-\int \sum_{l=0}^{k-1} C_{k-1}^l \nabla^l(|\nabla n|^2) \nabla^{k-1-l} (n+w_0) \nabla^{k+1} n \ dx\\
& =-\int \sum_{l=0}^{k-1} \sum_{m=0}^l C_{k-1}^l  C_l^m
     \nabla^{m+1}n \nabla^{l+1-m}n \nabla^{k-1-l} (n+w_0) \nabla^{k+1} n \ dx\\
& =-\int |\nabla n|^2 \nabla^{k-1} n \nabla^{k+1} n \ dx
   -\int \sum_{m=0}^{k-1} C_{k-1}^m  \nabla^{m+1}n\nabla^{k-m}n  (n+w_0)\nabla^{k+1}n \ dx\\
&\quad -\int \sum_{l=1}^{k-2} \sum_{m=0}^{l} C_{k-1}^l  C_l^m
    \nabla^{m+1} n \nabla^{l+1-m}n \nabla^{k-1-l}(n+w_0) \nabla^{k+1}n \ dx\\
&=I_{21}+I_{22}+I_{23}.
\end{aligned}
\end{equation}
With the help of Holder inequality and interpolation inequality \eqref{GN}, we deduce directly
\begin{equation}\label{248}
\begin{aligned}
I_{21}
&\le \|\nabla n\|_{L^6}\|\nabla n\|_{L^6}\|\nabla^{k-1} n\|_{L^6}\|\nabla^{k+1} n\|_{L^2}\\
&\lesssim \|\nabla^2 n\|_{L^2} \|\nabla n\|_{L^2}^{1-\frac{1}{k}}\|\nabla^{k+1} n\|_{L^2}^{\frac{1}{k}}
          \|\nabla n\|_{L^2}^{\frac{1}{k}}\|\nabla^{k+1}n\|_{L^2}^{1-\frac{1}{k}}
          \|\nabla^{k+1} n\|_{L^2}\\
&\lesssim \|\nabla^2 n\|_{L^2}\|\nabla n\|_{L^2}\|\nabla^{k+1} n\|_{L^2}^2\\
&\lesssim \delta \|\nabla^{k+1} n\|_{L^2}^2.
\end{aligned}
\end{equation}
In order to estimate the term $I_{22}$, using Holder inequality and \eqref{GN},
we obtain, for the case $0 \le m \le \left[\frac{k-1}{2}\right]$, that
\begin{equation}\label{249}
\begin{aligned}
&\quad \|\nabla^{m+1} n\|_{L^3}\|\nabla^{k-m}n\|_{L^6}\|\nabla^{k+1}n\|_{L^2}\\
&\lesssim \|\nabla^\alpha n\|_{L^2}^{1-\frac{m}{k}}\|\nabla^{k+1} n\|_{L^2}^{\frac{m}{k}}
         \|\nabla n \|_{L^2}^{\frac{m}{k}}\|\nabla^{k+1} n\|_{L^2}^{1-\frac{m}{k}}
         \|\nabla^{k+1} n\|_{L^2}\\
&\lesssim \delta \|\nabla^{k+1} n\|_{L^2}^2,
\end{aligned}
\end{equation}
where $\alpha$ is defined by
\begin{equation*}
\alpha=1+\frac{k}{2(k-m)} \in \left[\frac{3}{2},2\right].
\end{equation*}
Similarly, for the case $\left[\frac{k-1}{2}\right]+1 \le m \le k-1$, it is easy to deduce
\begin{equation}\label{2410}
\begin{aligned}
&\quad \|\nabla^{m+1} n\|_{L^3}\|\nabla^{k-m}n\|_{L^6}\|\nabla^{k+1}n\|_{L^2}\\
&\lesssim \|\nabla n\|_{L^2}^{1-\frac{m+\frac{1}{2}}{k}}\|\nabla^{k+1} n \|_{L^2}^{\frac{m+\frac{1}{2}}{k}}
         \|\nabla^\alpha n\|_{L^2}^{\frac{m+\frac{1}{2}}{k}}\|\nabla^{k+1} n\|_{L^2}^{1-\frac{m+\frac{1}{2}}{k}}
         \|\nabla^{k+1} n\|_{L^2}\\
&\lesssim \delta \|\nabla^{k+1} n\|_{L^2}^2,
\end{aligned}
\end{equation}
where $\alpha$ is defined by
\begin{equation*}
\alpha=1+\frac{k}{2m+1} \in \left(\frac{3}{2}, 2\right).
\end{equation*}
Combining \eqref{249}  with \eqref{2410}, then the term $I_{22}$ can be estimated as follow
\begin{equation}\label{2411}
I_{22}\lesssim \delta \|\nabla^{k+1} n\|_{L^2}^2.
\end{equation}
To deal with the term $I_{23}$. For the case $1\le l \le \left[\frac{k-2}{2}\right]$,
by \eqref{GN} and Holder inequality, we have
\begin{equation}\label{2412}
\begin{aligned}
&\|\nabla^{m+1} n\|_{L^6}\|\nabla^{l+1-m} n\|_{L^6}\|\nabla^{k-1-l} n\|_{L^6}\|\nabla^{k+1} n\|_{L^2}\\
&\lesssim \|\nabla^\alpha n\|_{L^2}^{1-\frac{m}{k}}\|\nabla^{k+1} n\|_{L^2}^{\frac{m}{k}}
          \|\nabla n\|_{L^2}^{1-\frac{l-m+1}{k}}\|\nabla^{k+1} n\|_{L^2}^{\frac{l-m+1}{k}}\\
&\quad    \times \|\nabla n\|_{L^2}^{\frac{l+1}{k}} \|\nabla^{k+1} n\|_{L^2}^{1-\frac{l+1}{k}}
          \|\nabla^{k+1} n\|_{L^2}\\
&\lesssim \|\nabla^\alpha n\|_{L^2}^{1-\frac{m}{k}}\|\nabla n\|_{L^2}^{1+\frac{m}{k}}
          \|\nabla^{k+1} n\|_{L^2}^2\\
&\lesssim \delta \|\nabla^{k+1} n\|_{L^2}^2,
\end{aligned}
\end{equation}
where $\alpha$ is defined by
\begin{equation*}
\alpha=1+\frac{k}{k-m}\in [2, 3).
\end{equation*}
Similarly, for the case $\left[\frac{k-2}{2}\right]+1 \le l \le k-2$, it is easy to obtain
\begin{equation}\label{2413}
\begin{aligned}
&\|\nabla^{m+1} n\|_{L^6}\|\nabla^{l+1-m} n\|_{L^6}\|\nabla^{k-1-l} n\|_{L^6}\|\nabla^{k+1} n\|_{L^2}\\
&\lesssim \|\nabla n\|_{L^2}^{1-\frac{m+1}{k}}\|\nabla^{k+1} n\|_{L^2}^{\frac{m+1}{k}}
          \|\nabla n\|_{L^2}^{1-\frac{l-m+1}{k}}\|\nabla^{k+1} n\|_{L^2}^{\frac{l-m+1}{k}}\\
&\quad    \times \|\nabla^\alpha n\|_{L^2}^{\frac{l+2}{k}} \|\nabla^{k+1} n\|_{L^2}^{1-\frac{l+2}{k}}
          \|\nabla^{k+1} n\|_{L^2}\\
&\lesssim \|\nabla n\|_{L^2}^{2-\frac{l+2}{k}} \|\nabla^\alpha n\|_{L^2}^{\frac{l+2}{k}}
          \|\nabla^{k+1} n\|_{L^2}^2\\
&\lesssim \delta \|\nabla^{k+1} n\|_{L^2}^2,
\end{aligned}
\end{equation}
where $\alpha$ is defined by
\begin{equation*}
\alpha=1+\frac{k}{l+2}\in [2,3).
\end{equation*}
Combining \eqref{2412} and \eqref{2413}, it follows that
\begin{equation}\label{2414}
I_{23} \lesssim \delta \|\nabla^{k+1} n\|_{L^2}^2.
\end{equation}
Substituting \eqref{248}, \eqref{2411} and \eqref{2414} into \eqref{247}, then we get
\begin{equation}\label{2415}
I_{2} \lesssim \delta \|\nabla^{k+1} n\|_{L^2}^2,
\end{equation}
which, together with \eqref{246}, completes the proof of lemma.
\end{proof}

Now, we turn to establish the time decay rates for the compressible nematic liquid crystal flows \eqref{2.1}-\eqref{2.4}.

\begin{lemm}\label{lemma2.5}
Under the assumptions of Theorem \ref{Decay1}, the global solution $(\varrho, u, n)$ of problem \eqref{2.1}-\eqref{2.4} satisfies
\begin{equation}\label{251}
\|\nabla^k \varrho(t)\|_{H^{N-k}}^2+\|\nabla^k u(t)\|_{H^{N-k}}^2+\|\nabla^k n(t)\|_{H^{N+1-k}}^2
\le C(1+t)^{-\frac{3}{2}-k}
\end{equation}
for $k=0,1$.
\end{lemm}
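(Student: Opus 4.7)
My plan is to combine the linear decay from Proposition \ref{proposition2.3} with a Fourier splitting argument applied to the energy--dissipation identity obtained by adding the $H^N$ estimate for $(\varrho,u)$ from Theorem \ref{Global-existence} to the $n$--estimates of Lemma \ref{lemma2.4} summed up to order $N+1$. First I would apply Duhamel's principle to \eqref{2.1} and write
\begin{equation*}
(\varrho, u, n)^{tr}(t) = G(t)\ast (\varrho_0, u_0, n_0)^{tr} + \int_0^t G(t-s)\ast (S_1, S_2, S_3)^{tr}(s)\,ds,
\end{equation*}
so that Proposition \ref{proposition2.3} supplies the rate $(1+t)^{-3/4}$ in $L^2$ for the homogeneous part, using $(\varrho_0,u_0,n_0)\in L^1$, while the Duhamel integral is controlled by $\int_0^t(1+t-s)^{-3/4}\|(S_1,S_2,S_3)(s)\|_{L^1\cap L^2}\,ds$. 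The $L^1\cap L^2$ norms of the source are handled by \eqref{2.11} in terms of $\delta$ times higher Sobolev norms of the solution, which sets up a bootstrap: define
\begin{equation*}
\mathcal{M}(t):=\sup_{0\le s\le t}(1+s)^{3/4}\bigl(\|(\varrho,u,n)(s)\|_{L^2}+(1+s)^{1/2}\|\nabla(\varrho,u,n)(s)\|_{L^2}\bigr),
\end{equation*}
and close via $\mathcal{M}(t)\le C+C\delta\,\mathcal{M}(t)$, absorbing by smallness.

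To upgrade the $L^2$ rate to the full $H^{N-k}$ (resp.\ $H^{N+1-k}$) bound required in \eqref{251}, I would use the Fourier splitting method of Schonbek. Adding the standard $H^N$ energy identity for $(\varrho,u)$ to \eqref{241} summed from $k=0$ to $N+1$ yields
\begin{equation*}
\tfrac{d}{dt}\mathcal{E}_N(t)+C\mathcal{D}_N(t)\le 0,
\end{equation*}
with $\mathcal{E}_N\sim\|(\varrho,u)\|_{H^N}^2+\|n\|_{L^2}^2+\|\nabla n\|_{H^N}^2$ and $\mathcal{D}_N$ controlling $\|\nabla\varrho\|_{H^{N-1}}^2+\|\nabla u\|_{H^N}^2+\|\nabla n\|_{H^{N+1}}^2$. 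Splitting frequencies at $|\xi|^2=C_0(1+t)^{-1}$, on the high--frequency region the dissipation dominates $C_0(1+t)^{-1}\mathcal{E}_N$, while on the low--frequency ball I bound the restricted $L^2$ mass via the Green--function representation and \eqref{2.11}. Solving the resulting Gronwall inequality $\tfrac{d}{dt}\mathcal{E}_N+C_0(1+t)^{-1}\mathcal{E}_N\lesssim(1+t)^{-5/2}$ gives $\mathcal{E}_N(t)\lesssim(1+t)^{-3/2}$, which is the $k=0$ case. The $k=1$ estimate then follows by repeating the splitting on the first--derivative energy for $(\nabla\varrho,\nabla u,\nabla^2 n)$, now feeding the already established $(1+t)^{-3/2}$ rate into the low--frequency source.

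The main obstacle I anticipate is the asymmetry between $n$ and $(\varrho,u)$: because the director equation is parabolic, the top norm of $n$ in \eqref{251} lies one derivative above those of $\varrho$ and $u$, yet Theorem \ref{Global-existence} only furnishes time--integrated dissipation up to $\nabla n\in H^{N-1}$. The missing top derivative $\|\nabla^{N+1}n\|_{L^2}$ must therefore be squeezed out of Lemma \ref{lemma2.4}, whose right--hand side is driven by $\|\nabla^{N+1}u\|_{L^2}^2$; consequently the Fourier splitting has to be performed on the asymmetric energy $\mathcal{E}_N$ above, and the supercritical nonlinearity $|\nabla n|^2(n+w_0)$ in $S_3$ must be absorbed through the smallness $\delta$ at every step. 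Controlling this interplay between the hyperbolic--parabolic block and the parabolic block, while keeping the low--frequency input from the Green function consistent with the bootstrap in $\mathcal{M}(t)$, is where the delicate bookkeeping lives.
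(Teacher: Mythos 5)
Your overall architecture (Duhamel representation plus Proposition \ref{proposition2.3} for the low-order norms, an energy--dissipation inequality for the high-order norms, and a smallness bootstrap) is the right one, but as organized the argument is circular. The bootstrap quantity $\mathcal{M}(t)$ tracks only $\|(\varrho,u,n)\|_{L^2}$ and $\|\nabla(\varrho,u,n)\|_{L^2}$, whereas the source bounds \eqref{2.11} that you feed into the Duhamel integral involve $\|\nabla^2 u\|_{H^1}$, $\|\nabla^2 n\|_{H^1}$ and $\|\nabla^2\varrho\|_{H^1}$; these are not controlled by $\mathcal{M}(t)$ (interpolating them against the uniform $H^N$ bound of Theorem \ref{Global-existence} only yields rates like $(1+s)^{-5/8}$, which is far too slow), so the inequality $\mathcal{M}(t)\le C+C\delta\,\mathcal{M}(t)$ does not follow. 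You defer the control of these higher derivatives to the subsequent Fourier-splitting step, but that step in turn needs the low-frequency mass $\int_{\{|\xi|^2\le C_0(1+t)^{-1}\}}|\widehat{(\varrho,u,n)}|^2\,d\xi\lesssim(1+t)^{-3/2}$, and the Duhamel contribution to this quantity again requires $\|(S_1,S_2,S_3)\|_{L^1}\lesssim(1+s)^{-5/4}$, i.e.\ exactly the first-derivative decay you have not yet established. Each of your two stages presupposes the other.

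The fix --- and the paper's actual route --- is to run a single bootstrap on a quantity equivalent to the full high-order norm. The paper sets $F(t)=\sup_{0\le\tau\le t}(1+\tau)^{5/2}\bigl(\|\nabla\varrho\|_{H^{N-1}}^2+\|\nabla u\|_{H^{N-1}}^2+\|\nabla n\|_{H^{N}}^2\bigr)$ and couples the Duhamel estimate for $\|\nabla(\varrho,u,n)\|_{L^2}^2$ with the energy inequality \eqref{253} for $\mathcal{F}^N_1$: since the dissipation there controls all derivatives of order at least $2$, adding $\|\nabla(\varrho,u,n)\|_{L^2}^2$ to both sides and using \eqref{254} gives $\frac{d}{dt}\mathcal{F}^N_1+C\mathcal{F}^N_1\le\|\nabla(\varrho,u,n)\|_{L^2}^2$, so the entire high norm decays exponentially against a first-order $L^2$ source that the spectral analysis bounds by $C(1+\delta F(t))(1+t)^{-5/2}$; the bootstrap closes because $F(t)$ is exactly the weighted supremum of a quantity equivalent to $\mathcal{F}^N_1$. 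In particular no Fourier splitting is used for $k=0,1$ (it is reserved for Lemmas \ref{lemma2.6} and \ref{lemma2.7}, where the rates are pushed beyond the linear one). If you wish to keep your Fourier-splitting version of the second stage, you must still merge your two stages into one bootstrap quantity containing the $H^N$-level norms, which essentially reproduces the paper's $F(t)$.
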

\begin{proof}
First of all, choosing the integer $k=l\in[0, N]$ in Lemma \ref{lemma2.4}, we have
\begin{equation}\label{252}
\frac{d}{dt}\left\|\nabla^l n\right\|_{L^2}^2
+C \left\|\nabla^{l+1} n \right\|_{L^2}^2
\lesssim \delta \left\|\nabla^{l+1}u\right\|_{L^2}^2.
\end{equation}
The combination of \eqref{4.7} and \eqref{252} yields
\begin{equation}\label{253}
\frac{d}{dt} \mathcal{F}^m_l(t)
+C_7\left(\|\nabla^{l+1} \varrho\|_{H^{m-l-1}}^2+\|\nabla^{l+1} u\|_{H^{m-l}}^2
         +\|\nabla^{l+1} n\|_{H^{m+1-l}}^2\right)\le 0,
\end{equation}
where $\mathcal{F}^m_l(t)$ is defined as
\begin{equation*}
\begin{aligned}
\mathcal{F}^m_l(t)=\|\nabla^l (\varrho, u)\|_{H^{m-l}}^2+\|\nabla^l n\|_{H^{m+1-l}}^2
+\frac{2C_2 \delta}{C_3} \sum_{l \le k \le m-1} \int \nabla^k u \cdot \nabla^{k+1} \varrho dx.
\end{aligned}
\end{equation*}
With the help of Young inequality, it is easy to deduce
\begin{equation}\label{254}
C_8^{-1} \!\left(\left\|\nabla^l (\varrho, u)\right\|_{H^{m-l}}^2\!+\!\left\|\nabla^{l} n \right\|_{H^{m+1-l}}^2\right)
\!\le\!
\mathcal{F}^m_l(t)
\!\le\! C_8
\left(\left\|\nabla^l (\varrho, u)\right\|_{H^{m-l}}^2\!
+\!\left\|\nabla^{l} n \right\|_{H^{m+1-l}}^2\right).
\end{equation}
Adding on both sides of \eqref{253} by $\|\nabla^l (\varrho, u, n)\|_{L^2}^2$ and applying
the equivalent relation \eqref{254}, then we have
\begin{equation}\label{255}
\frac{d}{dt}\mathcal{F}^m_l(t)+C\mathcal{F}^m_l(t)\le\|\nabla^l (\varrho, u, n)\|_{L^2}^2.
\end{equation}
Taking $l=1$ and $m=N$ specially in \eqref{255}, it arrives at
\begin{equation*}
\frac{d}{dt}\mathcal{F}^N_1(t)+C\mathcal{F}^N_1(t)\le\|\nabla (\varrho, u, n)\|_{L^2}^2,
\end{equation*}
which, together with the Gronwall inequality, gives
\begin{equation}\label{256}
\begin{aligned}
\mathcal{F}^N_1(t)
&\le \mathcal{F}^N_1(0)e^{-Ct}+\int_0^t e^{-C(t-\tau)}\|\nabla (\varrho, u, n)\|_{L^2}^2d\tau.
\end{aligned}
\end{equation}
In order to derive the time decay rate for $\mathcal{F}^N_1(t)$,  we need to control the term
$\|\nabla (\varrho, u, n)\|_{L^2}^2$. In fact, by Duhamel principle, one represents the solution
for the system \eqref{2.1}-\eqref{2.4} as
\begin{equation}\label{257}
(\varrho, u, n)^{tr}(t)=G(t)*(\varrho_0, u_0, n_0)^{tr}+\int_0^t G(t-s)*(S_1,S_2, S_3)^{tr}(s) ds.
\end{equation}
Denoting $F(t)=\underset{0 \le \tau \le t}{\sup}
(1+\tau)^{\frac{5}{2}}(\|\nabla \varrho(\tau)\|_{H^{N-1}}^2+\|\nabla u(\tau)\|_{H^{N-1}}^2
+\|\nabla n(\tau)\|_{H^{N}}^2)$, by virtue of \eqref{2.11}, \eqref{257} and Proposition
\ref{proposition2.3}, then we have
\begin{equation*}
\begin{aligned}
\|\nabla(\varrho, u, n)\|_{L^2}^2
&\le C(1+t)^{-\frac{5}{2}}
  +C\int_0^t \left(\|(S_1, S_2, S_3)\|_{L^1}^2+\|\nabla(S_1, S_2, S_3)\|_{L^2}^2\right)(1+t-\tau)^{-\frac{5}{2}}d\tau\\
&\le C(1+t)^{-\frac{5}{2}}
  +C\int_0^t \delta\left(\|\nabla \varrho\|_{H^2}^2+\|\nabla u\|_{H^2}^2
  +\|\nabla n\|_{H^2}^2 \right)(1+t-\tau)^{-\frac{5}{2}}d\tau\\
&\le C(1+t)^{-\frac{5}{2}}
  +C\delta F(t)\int_0^t (1+t-\tau)^{-\frac{5}{2}}(1+\tau)^{-\frac{5}{2}}d\tau\\
&\le C(1+t)^{-\frac{5}{2}}
  +C\delta  F(t)(1+t)^{-\frac{5}{2}},
\end{aligned}
\end{equation*}
where we have used the fact
\begin{equation*}
\begin{aligned}
&\int_0^t (1+t-\tau)^{-\frac{5}{2}}(1+\tau)^{-\frac{5}{2}}d\tau\\
&=\int_0^{\frac{t}{2}}+\int_{\frac{t}{2}}^{t}(1+t-\tau)^{-\frac{5}{2}}(1+\tau)^{-\frac{5}{2}}d\tau\\
&\le \left(1+\frac{t}{2}\right)^{-\frac{5}{2}}\int_0^{\frac{t}{2}}(1+\tau)^{-\frac{5}{2}}d\tau
+\left(1+\frac{t}{2}\right)^{-\frac{5}{2}} \int_{\frac{t}{2}}^{t}(1+t-\tau)^{-\frac{5}{2}}d\tau\\
&\le \left(1+t\right)^{-\frac{5}{2}}.
\end{aligned}
\end{equation*}
Thus, we have the estimate
\begin{equation}\label{258}
\|\nabla(\varrho, u, n)\|_{L^2}^2 \le C (1+t)^{-\frac{5}{2}}(1+\delta  F(t)).
\end{equation}
Inserting \eqref{258} into \eqref{256}, it follows immediately
\begin{equation}\label{259}
\begin{aligned}
\mathcal{F}^N_1(t)
&\le \mathcal{F}^N_1(0)e^{-Ct}+C\int_0^t e^{-C(t-\tau)} (1+\tau)^{-\frac{5}{2}}(1+\delta F(\tau))d\tau\\
&\le \mathcal{F}^N_1(0)e^{-Ct}+C(1+\delta F(t))\int_0^t e^{-C(t-\tau)} (1+\tau)^{-\frac{5}{2}}d\tau\\
&\le \mathcal{F}^N_1(0)e^{-Ct}+C(1+\delta F(t))(1+t)^{-\frac{5}{2}}\\
&\le C(1+\delta F(t))(1+t)^{-\frac{5}{2}},
\end{aligned}
\end{equation}
where we have used the simple fact
\begin{equation*}
\begin{aligned}
&\int_0^t e^{-C(t-\tau)} (1+\tau)^{-\frac{5}{2}}d\tau\\
&=\int_0^{\frac{t}{2}}+\int_{\frac{t}{2}}^t e^{-C(t-\tau)} (1+\tau)^{-\frac{5}{2}}d\tau\\
&\le e^{-\frac{c}{2}t}\int_0^{\frac{t}{2}}(1+\tau)^{-\frac{5}{2}}d\tau
     +\left(1+\frac{t}{2}\right)^{-\frac{5}{2}} \int_{\frac{t}{2}}^t e^{-C(t-\tau)}d\tau\\
&\le C\left(1+t\right)^{-\frac{5}{2}}.
\end{aligned}
\end{equation*}
Hence, by virtue of the definition of $F(t)$ and \eqref{259}, we have
\begin{equation*}
F(t) \le C(1+\delta F(t)),
\end{equation*}
which, in view of the smallness of $\delta$, gives
\begin{equation*}
F(t) \le C.
\end{equation*}
Therefore, we have the following decay rates
\begin{equation}\label{2510}
\|\nabla \varrho\|_{H^{N-1}}^2+\|\nabla u\|_{H^{N-1}}^2+\|\nabla n\|_{H^{N}}^2 \le C(1+t)^{-\frac{5}{2}}.
\end{equation}
On the other hand, by \eqref{2.11}, \eqref{257}, \eqref{2510} and Proposition \ref{proposition2.3},
it is easy to deduce
\begin{equation*}
\begin{aligned}
&\|(\varrho, u, n)(t)\|_{L^2}^2\\
&\le C(1+t)^{-\frac{3}{2}}+ C\int_0^t \left(\|(S_1, S_2, S_3)\|_{L^1}^2+\|(S_1, S_2, S_3)\|_{L^2}^2\right)(1+t-\tau)^{-\frac{3}{2}}d\tau\\
&\le C(1+t)^{-\frac{3}{2}}+C\int_0^t \delta\left(\|\nabla \varrho\|_{H^1}^2+\|\nabla u\|_{H^2}^2
  +\|\nabla n\|_{H^3}^2 \right)(1+t-\tau)^{-\frac{3}{2}}d\tau\\
&\le C(1+t)^{-\frac{3}{2}}+C\int_0^t (1+t-\tau)^{-\frac{5}{2}}(1+\tau)^{-\frac{3}{2}}d\tau\\
&\le C(1+t)^{-\frac{3}{2}},
\end{aligned}
\end{equation*}
where we have used the fact
\begin{equation*}
\int_0^t (1+t-\tau)^{-\frac{5}{2}}(1+\tau)^{-\frac{3}{2}}d\tau \le C\left(1+t\right)^{-\frac{3}{2}}.
\end{equation*}
Hence, we obtain the following time decay rates
\begin{equation*}
\|(\varrho, u, n)\|_{L^2}^2 \le C(1+t)^{-\frac{3}{2}}.
\end{equation*}
Therefore, we complete the proof of the lemma.
\end{proof}

\subsection{Improvement decay rates for the higher-order spatial derivatives of solution}

\quad In this subsection, one will improve the time decay rates for the higher-order
spatial derivatives of density, velocity and director. More precisely, we have the
following convergence rates.

\begin{lemm}\label{lemma2.6}
Under the assumptions of Theorem \ref{Decay1}, the global solution $(\varrho, u, n)$ of problem \eqref{2.1}-\eqref{2.4}
has following decay rates for all $t\ge t_0$$($$t_0$ is a constant defined below$)$,
\begin{equation}\label{261}
\|\nabla^k \varrho\|_{H^{N-k}}^2+\|\nabla^k u\|_{H^{N-k}}^2+\|\nabla^k n\|_{H^{N+1-k}}^2 \le C (1+t)^{-\frac{3}{2}-k}
\end{equation}
where $k=0, 1, 2, ..., N-1.$
\end{lemm}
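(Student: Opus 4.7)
The plan is to argue by induction on $k$, with the cases $k=0,1$ supplied by Lemma~\ref{lemma2.5}, and the inductive step $k-1\Rightarrow k$ (for $2\le k\le N-1$) carried out by applying Schonbek's Fourier splitting method to the Lyapunov functional $\mathcal{F}^N_k(t)$ built in \eqref{253}--\eqref{254}. Taking $l=k$ and $m=N$ in \eqref{253} supplies the pure dissipation inequality
\begin{equation*}
\frac{d}{dt}\mathcal{F}^N_k(t)+C_7\bigl(\|\nabla^{k+1}\varrho\|_{H^{N-k-1}}^2+\|\nabla^{k+1}u\|_{H^{N-k}}^2+\|\nabla^{k+1}n\|_{H^{N+1-k}}^2\bigr)\le 0.
\end{equation*}

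Introducing the time-dependent Schonbek ball $S(t)=\{\xi\in\mathbb{R}^3:|\xi|^2\le C_0/(1+t)\}$, Plancherel gives the level-by-level bound
\begin{equation*}
\|\nabla^{j+1}f\|_{L^2}^2\ge\frac{C_0}{1+t}\|\nabla^j f\|_{L^2}^2-\frac{C_0}{1+t}\int_{S(t)}|\xi|^{2j}|\hat f(\xi,t)|^2\,d\xi
\end{equation*}
for $f\in\{\varrho,u,n\}$ and $j\ge k$. Summing and invoking the equivalence \eqref{254} converts the energy inequality into
\begin{equation*}
\frac{d}{dt}\mathcal{F}^N_k(t)+\frac{C_1}{1+t}\mathcal{F}^N_k(t)\le\frac{C_1}{1+t}\int_{S(t)}|\xi|^{2k}\bigl(|\hat\varrho|^2+|\hat u|^2+|\hat n|^2\bigr)d\xi,
\end{equation*}
where $C_0$, and hence $C_1$, is chosen large enough that $C_1>k+\tfrac{3}{2}$.

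The key step is to estimate the low-frequency integral on the right. The Duhamel formula \eqref{257} together with the uniform bound $|\hat G(\xi,t)|\lesssim 1$ for $|\xi|\le 1$ read off from the explicit expression for $\hat G$ yields, for $\xi\in S(t)$, the pointwise estimate $|\hat\varrho|+|\hat u|+|\hat n|\lesssim\|(\varrho_0,u_0,n_0)\|_{L^1}+\int_0^t\|(S_1,S_2,S_3)(s)\|_{L^1}\,ds$. Since $\int_{S(t)}|\xi|^{2k}d\xi\lesssim(1+t)^{-k-3/2}$, and since \eqref{2.11} together with the decay of $\|\nabla(\varrho,u,n)\|_{H^1}$ from Lemma~\ref{lemma2.5} gives $\|(S_j)(s)\|_{L^1}\lesssim\delta(1+s)^{-5/4}$, which is uniformly $L^1$ in time, the low-frequency integral is controlled by $C(1+t)^{-k-3/2}$. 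Substituting back produces the scalar ODE $\frac{d}{dt}\mathcal{F}^N_k+\frac{C_1}{1+t}\mathcal{F}^N_k\le C(1+t)^{-k-5/2}$; integrating it over $[t_0,t]$ with integrating factor $(1+t)^{C_1}$ gives $\mathcal{F}^N_k(t)\lesssim(1+t)^{-k-3/2}$ for all $t\ge t_0$, where $t_0$ is picked so that the transient $\mathcal{F}^N_k(t_0)(1+t_0)^{C_1}(1+t)^{-C_1}$ is already dominated by the nonlinear contribution. The equivalence \eqref{254} then upgrades this into \eqref{261} at order $k$ and closes the induction.

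The main technical obstacle is the control of the nonlinear term $\int_0^t\|(S_j)(s)\|_{L^1}ds$ arising from the Duhamel expression on $S(t)$: without time integrability of $\|(S_j)(s)\|_{L^1}$ the Fourier splitting argument would not close at the expected rate. Fortunately the bound from \eqref{2.11} costs only one copy of $\|\nabla(\varrho,u,n)\|_{H^1}$, whose squared decay $(1+s)^{-5/2}$ provided by Lemma~\ref{lemma2.5} is exactly enough. A minor subtlety is that $\mathcal{F}^N_k$ contains $n$ at one higher derivative than $\varrho$ and $u$, but since the $n$-block of $\hat G$ is the pure heat semigroup $e^{-|\xi|^2 t}I_{3\times 3}$, the Fourier splitting is compatible block by block and the argument runs uniformly across all three unknowns.
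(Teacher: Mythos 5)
Your proposal is correct, and its overall architecture --- the dissipation inequality \eqref{253} for $\mathcal{F}^N_k$, Schonbek's frequency splitting over the ball $S(t)$, the equivalence \eqref{254}, and integration against the weight $(1+t)^{C_1}$ --- coincides with the paper's. The genuinely different ingredient is how you control the low-frequency remainder. The paper bounds $\int_{S_0}|\xi|^{2k}|\hat{f}|^2\,d\xi$ simply by $\|\nabla^k f\|_{L^2}^2$ and then invokes the induction hypothesis at level $k$ (see \eqref{264} and \eqref{262}), so the remainder becomes $\frac{R^2}{(1+t)^2}(1+t)^{-\frac{3}{2}-k}$; this keeps the whole step inside the energy framework but makes the induction on $k$ essential. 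You instead estimate the low frequencies directly from the Duhamel representation \eqref{257}, using $|\hat{G}(\xi,\tau)|\lesssim 1$ on the low-frequency region, $|\hat{S}_j|\le\|S_j\|_{L^1}\lesssim\delta(1+s)^{-5/4}\in L^1_s$ (which follows from \eqref{2.11} and Lemma \ref{lemma2.5}), and the volume bound $\int_{S(t)}|\xi|^{2k}\,d\xi\lesssim(1+t)^{-k-\frac{3}{2}}$. This yields the same critical right-hand side $(1+t)^{-k-\frac{5}{2}}$, so the ODE closes identically; but note that your version never actually uses the inductive hypothesis at level $k-1$ --- only the $k=1$ rates of Lemma \ref{lemma2.5} enter, through the time-integrability of $\|S_j\|_{L^1}$ --- so despite your framing the argument is really uniform in $k$ rather than inductive. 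What the paper's route buys is that it avoids returning to the Green matrix and needs no time-integrability of the nonlinearity; what yours buys is a single non-iterated pass over all $2\le k\le N-1$, at the price of justifying the pointwise bound on $\hat{G}$ (which is true but not recorded in the paper) and of spelling out how the top derivative levels, missed by the telescoping, are absorbed into the dissipation via $R/(1+t)\le 1$ for $t\ge t_0$ --- a step you gloss over exactly as the paper does in \eqref{269}.
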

\begin{proof}
We will take the strategy of induction to give the proof for the convergence rates \eqref{261}.
In fact, the inequality \eqref{251} implies \eqref{261} for the case $k=1$.
By the general step of induction, assume that the decay rates \eqref{261} hold on for the case $k=l$, i.e.
\begin{equation}\label{262}
\|\nabla^l \varrho\|_{H^{N-l}}^2+\|\nabla^l u\|_{H^{N-l}}^2+\|\nabla^l n\|_{H^{N+1-l}}^2 \le C (1+t)^{-\frac{3}{2}-l},
\end{equation}
for $l=1, 2, 3, ..., N-2.$ Then, we need to verify that \eqref{261} holds on for the case $k=l+1$.
Indeed, replacing $l$ as $l+1$ and taking $m=N$ in \eqref{253}, we have
\begin{equation*}
\frac{d}{dt} \mathcal{F}^N_{l+1}(t)
+C_7\left(\|\nabla^{l+2} \varrho\|_{H^{N-l-2}}^2+\|\nabla^{l+2} u\|_{H^{N-l-1}}^2
         +\|\nabla^{l+2} n\|_{H^{N-l}}^2\right)\le 0,
\end{equation*}
which implies
\begin{equation}\label{263}
\begin{aligned}
&\frac{d}{dt} \mathcal{F}^N_{l+1}(t)
+\frac{C_7}{2}\left(\|\nabla^{l+2} \varrho\|_{L^2}^2+\|\nabla^{l+2} \varrho\|_{H^{N-l-2}}^2
+\|\nabla^{l+2} u\|_{H^{N-l-1}}^2+\|\nabla^{l+2} n\|_{H^{N-l}}^2\right)\le 0.
\end{aligned}
\end{equation}
Denoting the time sphere $S_0$(see \cite{Schonbek1}) as follows
\begin{equation*}
S_0:=\left\{\left.\xi \in \mathbb{R}^3 \right| |\xi| \le \left(\frac{R}{1+t}\right)^{\frac{1}{2}}\right\},
\end{equation*}
where $R$ is a constant defined below. By virtue of Parseval identity, then it is easy to deduce
\begin{equation*}
\begin{aligned}
\|\nabla^{k+2} \varrho\|_{L^2}^2
&=   \int_{\mathbb{R}^3}|\xi|^{2(k+2)}|\hat{\varrho}|^2 d\xi\\
&\ge \int_{{\mathbb{R}^3}/ S_0}|\xi|^{2(k+2)}|\hat{\varrho}|^2 d\xi\\
&\ge \frac{R}{1+t}\int_{{\mathbb{R}^3}/ S_0}|\xi|^{2(k+1)}|\hat{\varrho}|^2 d\xi\\
&\ge \frac{R}{1+t}\int_{{\mathbb{R}^3}}|\xi|^{2(k+1)}|\hat{\varrho}|^2 d\xi
     -\frac{R^2}{(1+t)^2}\int_{S_0}|\xi|^{2k}|\hat{\varrho}|^2 d\xi\\
&\ge \frac{R}{1+t}\int_{{\mathbb{R}^3}}|\xi|^{2(k+1)}|\hat{\varrho}|^2 d\xi
     -\frac{R^2}{(1+t)^2}\int_{{\mathbb{R}^3}}|\xi|^{2k}|\hat{\varrho}|^2 d\xi.\\
\end{aligned}
\end{equation*}
Hence, we have the following inequality
\begin{equation}\label{264}
\|\nabla^{l+2} \varrho\|_{L^2}^2
\ge \frac{R}{1+t} \|\nabla^{l+1} \varrho\|_{L^2}^2-\frac{R^2}{(1+t)^2}\|\nabla^{l} \varrho\|_{L^2}^2.
\end{equation}
Similarly, it is easy to obtain
\begin{equation}\label{265}
\|\nabla^{k+2} u\|_{L^2}^2
\ge \frac{R}{1+t} \|\nabla^{k+1} u\|_{L^2}^2-\frac{R^2}{(1+t)^2}\|\nabla^{k} u\|_{L^2}^2.
\end{equation}
Summing up in \eqref{265} with respect to $k$ from $k=l$ to $k=N-1$, one deduces
\begin{equation}\label{266}
\|\nabla^{l+2} u\|_{H^{N-l-1}}^2
\ge \frac{R}{1+t} \|\nabla^{l+1} u\|_{H^{N-l-1}}^2-\frac{R^2}{(1+t)^2}\|\nabla^{l} u\|_{H^{N-1-l}}^2.
\end{equation}
In the same manner, it is easy to deduce
\begin{equation}\label{267}
\|\nabla^{l+2} n\|_{H^{N-l}}^2
\ge \frac{R}{1+t} \|\nabla^{l+1} n\|_{H^{N-l}}^2-\frac{R^2}{(1+t)^2}\|\nabla^{l} n\|_{H^{N-l}}^2.
\end{equation}
Substituting \eqref{264},\eqref{266} and \eqref{267} into \eqref{263}, it follows immediately
\begin{equation}\label{268}
\begin{aligned}
&\frac{d}{dt} \mathcal{F}^N_{l+1}(t)
+\frac{C_7}{2}\left[\|\nabla^{l+2} \varrho\|_{H^{N-l-2}}^2+
\frac{R}{1+t} \left(\|\nabla^{l+1} \varrho\|_{L^2}^2+\|\nabla^{l+1} u\|_{H^{N-l-1}}^2+\|\nabla^{l+1} n\|_{H^{N-l}}^2\right)
\right]\\
&\le \frac{C_7 R^2}{2(1+t)^2}\left(\|\nabla^{l} \varrho\|_{L^2}^2
+\|\nabla^{l} u\|_{H^{N-1-l}}^2+\|\nabla^{l} n\|_{H^{N-l}}^2\right).
\end{aligned}
\end{equation}
For some sufficiently large time $t \ge R-1$, we have
\begin{equation*}
\frac{R}{1+t} \le 1,
\end{equation*}
which implies
\begin{equation}\label{269}
\frac{R}{1+t} \|\nabla^{l+2} \varrho\|_{H^{N-l-2}}^2  \le \|\nabla^{l+2} \varrho\|_{H^{N-l-2}}^2.
\end{equation}
Plugging \eqref{269} into \eqref{268}, it arrives at
\begin{equation*}
\begin{aligned}
&\frac{d}{dt} \mathcal{F}^N_{l+1}(t)
+\frac{R C_7}{2(1+t)}\left(\|\nabla^{l+1} \varrho\|_{H^{N-l-1}}^2+\|\nabla^{l+1} u\|_{H^{N-l-1}}^2
+\|\nabla^{l+1} n\|_{H^{N-l}}^2\right)\\
&\le \frac{R^2 C_7}{2(1+t)^2}\left(\|\nabla^{l} \varrho\|_{L^2}^2
+\|\nabla^{l} u\|_{H^{N-1-l}}^2+\|\nabla^{l} n\|_{H^{N-l}}^2\right),
\end{aligned}
\end{equation*}
which, together with the equivalent relation \eqref{254} and the convergence rates \eqref{262}, yields
\begin{equation}\label{2610}
\frac{d}{dt} \mathcal{F}^N_{l+1}(t)+\frac{R C_7}{2C_8(1+t)}\mathcal{F}^N_{l+1}(t)\le C(1+t)^{-\frac{7}{2}-l}.
\end{equation}
Choosing
\begin{equation*}
R=\frac{2(l+3)C_8}{C_7},
\end{equation*}
and multiplying both sides of \eqref{2610} by $(1+t)^{l+3}$, we have
\begin{equation}\label{2611}
\frac{d}{dt}\left[(1+t)^{l+3}\mathcal{F}^N_{l+1}(t)\right]\le C(1+t)^{-\frac{1}{2}},
\end{equation}
for any $t\ge t_0$ and $t_0 :=\frac{2(l+3)C_8}{C_7}-1.$ Integrating \eqref{2611} over $[0, t]$,
it follows directly
\begin{equation*}
\mathcal{F}^N_{l+1}(t)\le \left[\mathcal{F}^N_{l+1}(0)+C(1+t)^{\frac{1}{2}}\right](1+t)^{-(l+3)},
\end{equation*}
which, together with equivalent relation \eqref{254}, gives rise to
\begin{equation*}
\|\nabla^{l+1} \varrho\|_{H^{N-l-1}}^2+\|\nabla^{l+1} u\|_{H^{N-l-1}}^2+\|\nabla^{l+1} n\|_{H^{N-l}}^2
\le C (1+t)^{-\frac{5}{2}-l}.
\end{equation*}
Hence, we have verified that \eqref{261} holds on for the case $k=l+1$.
By the general step of induction, we complete the proof of the lemma.
\end{proof}

Now, we will focus on improving the time decay rates for the $N-$th and $(N+1)-$th order derivatives
of direction field. More precisely, we will establish the following time decay rates.

\begin{lemm}\label{lemma2.7}
Under the assumptions of Theorem \ref{Decay1}, the global solution $(\varrho, u, n)$ of problem \eqref{2.1}-\eqref{2.4}
satisfies
\begin{equation}\label{271}
\|\nabla^k n\|_{H^{N+1-k}}^2 \le C (1+t)^{-\frac{3}{2}-k}
\end{equation}
for integer $k=0, 1, 2, ...,N+1.$
\end{lemm}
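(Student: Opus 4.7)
The plan is to extend the conclusion of Lemma \ref{lemma2.6}, which already yields the advertised decay for $k=0,1,\dots,N-1$, to the two endpoint cases $k=N$ and $k=N+1$. The key structural observation is that the director equation $\eqref{2.1}_3$ is a perturbed heat equation decoupled at the linear level from the $(\varrho,u)$ block, so one can push the energy estimate for $n$ one derivative higher than was possible for $(\varrho,u)$ in Lemma \ref{lemma2.6}. I would prove the case $k=N$ first and use it as input for the case $k=N+1$, in the same inductive spirit as Lemma \ref{lemma2.6}.

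For the case $k=N$, I would apply $\nabla^N$ to $\eqref{2.1}_3$ and test with $\nabla^N n$; shift one derivative off the source via integration by parts and absorb half of the dissipation by Young's inequality to get
\begin{equation*}
\frac{d}{dt}\|\nabla^N n\|_{L^2}^2+\|\nabla^{N+1} n\|_{L^2}^2 \lesssim \|\nabla^{N-1} S_3\|_{L^2}^2.
\end{equation*}
Then I would apply the Fourier splitting bound (as in the proof of Lemma \ref{lemma2.6})
\begin{equation*}
\|\nabla^{N+1} n\|_{L^2}^2 \ge \frac{R}{1+t}\|\nabla^N n\|_{L^2}^2 - \frac{R^2}{(1+t)^2}\|\nabla^{N-1} n\|_{L^2}^2,
\end{equation*}
insert the rate $\|\nabla^{N-1} n\|_{L^2}^2 \lesssim (1+t)^{-N-1/2}$ provided by Lemma \ref{lemma2.6}, choose $R$ sufficiently large, multiply by $(1+t)^R$ and integrate to conclude $\|\nabla^N n\|_{L^2}^2 \lesssim (1+t)^{-N-3/2}$.

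For the case $k=N+1$, I would repeat the same procedure one level up: apply $\nabla^{N+1}$ to $\eqref{2.1}_3$, test with $\nabla^{N+1} n$, integrate by parts and apply Young to obtain
\begin{equation*}
\frac{d}{dt}\|\nabla^{N+1} n\|_{L^2}^2+\|\nabla^{N+2} n\|_{L^2}^2 \lesssim \|\nabla^N S_3\|_{L^2}^2,
\end{equation*}
then apply Fourier splitting at this level with $\|\nabla^N n\|_{L^2}^2$ in place of $\|\nabla^{N-1} n\|_{L^2}^2$. Feeding in the just-established bound $\|\nabla^N n\|_{L^2}^2 \lesssim (1+t)^{-N-3/2}$, the same Gronwall procedure delivers $\|\nabla^{N+1} n\|_{L^2}^2 \lesssim (1+t)^{-N-5/2}$. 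For the intermediate case $k=N$ of the lemma statement, the full Sobolev norm $\|\nabla^N n\|_{H^1}^2=\|\nabla^N n\|_{L^2}^2+\|\nabla^{N+1} n\|_{L^2}^2$ is recovered by combining the two single-order bounds.

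The principal technical obstacle is the derivation of sufficiently fast decay for the source norms $\|\nabla^{N-1} S_3\|_{L^2}^2$ and $\|\nabla^N S_3\|_{L^2}^2$ (at rates no worse than $(1+t)^{-N-5/2}$ and $(1+t)^{-N-7/2}$ respectively) to close the Gronwall step. Since $S_3=-u\cdot\nabla n+|\nabla n|^2(n+w_0)$, this requires Leibniz expansion in both nonlinearities followed by Gagliardo--Nirenberg interpolation among $L^2$, $L^3$, $L^6$ and $L^\infty$ norms, using the sharp decay rates of Lemmas \ref{lemma2.5} and \ref{lemma2.6}. The supercritical cubic term $|\nabla n|^2(n+w_0)$ is the most delicate piece because of the non-decaying constant vector $w_0$, but the two $\nabla n$ factors each decay at least like $(1+t)^{-5/4}$ in $L^2$, which is enough to recover the needed rate. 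The bookkeeping of exponents is essentially parallel to the one already carried out in Lemma \ref{lemma2.4}, so no qualitatively new difficulty arises.
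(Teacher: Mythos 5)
Your proposal is correct and follows essentially the same route as the paper: energy identities for $\nabla^N n$ and $\nabla^{N+1}n$ obtained by testing $\eqref{2.1}_3$, Fourier splitting in the style of \eqref{264}, and a two-stage bootstrap fed by the rates of Lemma \ref{lemma2.6}. The only organizational difference is that the paper first Fourier-splits the \emph{sum} $\|\nabla^N n\|_{L^2}^2+\|\nabla^{N+1}n\|_{L^2}^2$ (so that the intermediate improvement $\|\nabla^{N+1}n\|_{L^2}^2\lesssim(1+t)^{-N-3/2}$ is available before the final step), whereas your sequential treatment leaves $\|\nabla^{N+1}n\|_{L^2}^2$ at the rate $(1+t)^{-N-1/2}$ during the estimate of $\|\nabla^N S_3\|_{L^2}^2$; this still closes, but only because terms such as $\nabla u\cdot\nabla^N n$ are paired as $\|\nabla u\|_{L^\infty}\|\nabla^N n\|_{L^2}$ rather than $\|\nabla u\|_{L^3}\|\nabla^N n\|_{L^6}$, exactly the careful choice of exponents you flag as the main technical point.
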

\begin{proof}
Taking $N-$th spatial derivatives to both sides of $(2.1)_3$,
multiplying $\nabla^N n$ and integrating over $\mathbb{R}^3$, then we have
\begin{equation}\label{272}
\begin{aligned}
&\frac{1}{2}\frac{d}{dt}\!\int \!|\nabla^N n|^2\! dx+\!\int \!|\nabla^{N+1} n|^2 dx\\
&=-\!\int \nabla^N (u \cdot \nabla n)\nabla^N n dx+\!\int \!\nabla^{N}(|\nabla n|^2(n+w_0))\!\nabla^N ndx.
\end{aligned}
\end{equation}
By virtue of the integration by part, Leibnitz formula, Holder and Young inequalities, it is easy to deduce
\begin{equation}\label{273}
\begin{aligned}
&-\int \nabla^N (u \cdot \nabla n)\nabla^N n \ dx\\
&\lesssim \sum_{k=0}^{N-1} \|\nabla^k u\|_{L^3}\|\nabla^{N-k}n\|_{L^6}\|\nabla^{N+1}n\|_{L^2}\\
&\lesssim \sum_{k=1}^{N-1} \|\nabla^k u\|_{H^1}\|\nabla^{N+1-k}n\|_{L^2}\|\nabla^{N+1}n\|_{L^2}
          +\|u\|_{H^1}\|\nabla^{N+1}n\|_{L^2}^2\\
&\lesssim \sum_{k=1}^{N-1} \|\nabla^k u\|_{H^1}^2\|\nabla^{N+1-k}n\|_{L^2}^2
          +(\varepsilon+\delta) \|\nabla^{N+1}n\|_{L^2}^2.\\
\end{aligned}
\end{equation}
Taking $k=N$ in \eqref{247} specially, we have
\begin{equation}\label{274}
\int \nabla^{N}(|\nabla n|^2(n+w_0))\nabla^N ndx \lesssim \delta \|\nabla^{N+1}n\|_{L^2}^2.
\end{equation}
Substituting  \eqref{273} and \eqref{274} into \eqref{272},
in view of the smallness of $\delta$ and $\varepsilon$, it arrives at
\begin{equation}\label{275}
\frac{d}{dt}\int |\nabla^N n|^2 dx+\!\int |\nabla^{N+1} n|^2 dx
\lesssim \sum_{k=1}^{N-1} \|\nabla^k u\|_{H^1}^2\|\nabla^{N+1-k}n\|_{L^2}^2.
\end{equation}
On the other hand, taking ${(N+1)}-$th spatial derivatives to both sides of $(2.1)_3$,
multiplying by $\nabla^{N+1} n$ and integrating over $\mathbb{R}^3$, then we have
\begin{equation}\label{276}
\begin{aligned}
&\frac{1}{2}\frac{d}{dt}\int |\nabla^{N+1} n|^2 dx+\!\int |\nabla^{N+2} n|^2 dx\\
&=-\!\int \nabla^{N+1} (u \cdot \nabla n)\nabla^{N+1} n dx+\!\int \nabla^{N+1}(|\nabla n|^2(n+w_0))\nabla^{N+1} n \ dx.
\end{aligned}
\end{equation}
By virtue of the Leibnitz formula, Holder and Sobolev inequalities, it is easy to deduce
\begin{equation}\label{277}
\begin{aligned}
&-\int \nabla^{N+1} (u \cdot \nabla n)\nabla^{N+1} n \ dx\\
&\lesssim \sum_{k=0}^{N-1} \|\nabla^k u\|_{L^3}\|\nabla^{N+1-k}n\|_{L^6}\|\nabla^{N+2}n\|_{L^2}
           +\|\nabla n\|_{L^\infty}\|\nabla^N u\|_{L^2}\|\nabla^{N+2} n\|_{L^2}\\
&\lesssim \sum_{k=1}^{N-1} \|\nabla^k u\|_{H^1}\|\nabla^{N+2-k}n\|_{L^2}\|\nabla^{N+2}n\|_{L^2}
           +\|u\|_{H^1}\|\nabla^{N+2}n\|_{L^2}^2\\
&\quad     +\|\nabla n\|_{L^\infty}\|\nabla^N u\|_{L^2}\|\nabla^{N+2} n\|_{L^2}\\
&\lesssim \sum_{k=1}^{N-1} \|\nabla^k u\|_{H^1}^2\|\nabla^{N+2-k}n\|_{L^2}^2
          +\|\nabla n\|_{L^\infty}^2\|\nabla^N u\|_{L^2}^2+(\varepsilon+\delta) \|\nabla^{N+2}n\|_{L^2}^2.\\
\end{aligned}
\end{equation}
On the other hand, taking $k=N+1$ in \eqref{247} specially, it is easy to deduce
\begin{equation}\label{278}
\int \nabla^{N+1}(|\nabla n|^2(n+w_0))\nabla^{N+1} n \ dx \lesssim \delta \|\nabla^{N+2}n\|_{L^2}^2.
\end{equation}
Substituting  \eqref{277} and  \eqref{278} into \eqref{276}, by virtue of the smallness of $\delta$
and $\varepsilon$, we have
\begin{equation}\label{279}
\begin{aligned}
&\frac{d}{dt}\int |\nabla^{N+1} n|^2 dx+\!\int |\nabla^{N+2} n|^2 dx\\
&\lesssim \sum_{k=1}^{N-1} \|\nabla^k u\|_{H^1}^2\|\nabla^{N+2-k}n\|_{L^2}^2
          + \|\nabla n\|_{L^\infty}^2\|\nabla^N u\|_{L^2}^2.
\end{aligned}
\end{equation}
Adding \eqref{275} to \eqref{279} and applying the Sobolev interpolation
inequality \eqref{GN}, it arrives at
\begin{equation}\label{2710}
\begin{aligned}
&\frac{d}{dt}\int(|\nabla^N n|^2+|\nabla^{N+1} n|^2)dx+\!\int(|\nabla^{N+1} n|^2+|\nabla^{N+2} n|^2)dx\\
&\lesssim \sum_{k=1}^{N-1} \|\nabla^k u\|_{H^1}^2\|\nabla^{N+1-k}n\|_{L^2}^2
          +\sum_{k=1}^{N-1} \|\nabla^k u\|_{H^1}^2\|\nabla^{N+2-k}n\|_{L^2}^2\\
&\quad \   +\|\nabla n\|_{L^\infty}^2\|\nabla^N u\|_{L^2}^2 \\
&=I\!I_{1}+I\!I_{2}+I\!I_{3}.
\end{aligned}
\end{equation}
Applying the decay rates \eqref{261}, it follows directly
\begin{equation}\label{2711}
\begin{aligned}
I\!I_{1}
&=\sum_{k=2}^{N-1} \|\nabla^k u\|_{H^1}^2\|\nabla^{N+1-k}n\|_{L^2}^2+\|\nabla u\|_{H^1}^2\|\nabla^{N}n\|_{L^2}^2\\
&\lesssim \sum_{k=2}^{N-1} (1+t)^{-(\frac{3}{2}+k)}(1+t)^{-(\frac{5}{2}+N-k)}
          +(1+t)^{-\frac{5}{2}}(1+t)^{-(\frac{1}{2}+N)}\\
&\lesssim (1+t)^{-(N+4)}+(1+t)^{-(N+3)}\\
&\lesssim (1+t)^{-(N+3)}.
\end{aligned}
\end{equation}
Similarly, we obtain
\begin{equation}\label{2712}
\begin{aligned}
I\!I_{2}
&\lesssim\sum_{k=2}^{N-1} \|\nabla^k u\|_{H^1}^2\|\nabla^{N+2-k}n\|_{L^2}^2
                  +\|\nabla u\|_{H^1}^2\|\nabla^{N+1}n\|_{L^2}^2\\
&\lesssim \sum_{k=2}^{N-1} (1+t)^{-(\frac{3}{2}+k)}(1+t)^{-(\frac{5}{2}+N-k)}
          +(1+t)^{-\frac{5}{2}}(1+t)^{-(\frac{1}{2}+N)}\\
&\lesssim (1+t)^{-(N+4)}+(1+t)^{-(N+3)}\\
&\lesssim (1+t)^{-(N+3)},\\
\end{aligned}
\end{equation}
and
\begin{equation}\label{2713}
\begin{aligned}
I\!I_{3}\lesssim (1+t)^{-\frac{5}{2}}(1+t)^{-(\frac{1}{2}+N)}\lesssim (1+t)^{-(N+3)}.
\end{aligned}
\end{equation}
Inserting  \eqref{2711}-\eqref{2713} into \eqref{2710}, it arrives at obviously
\begin{equation}\label{2714}
\frac{d}{dt}\int(|\nabla^N n|^2+|\nabla^{N+1} n|^2)dx+\int(|\nabla^{N+1} n|^2+|\nabla^{N+2} n|^2)dx
\lesssim (1+t)^{-(N+3)}.
\end{equation}
Taking the Fourier splitting method as the inequality \eqref{264} and the decay rates \eqref{261},
we have
\begin{equation}\label{2715}
\begin{aligned}
&\frac{d}{dt}\int(|\nabla^N n|^2+|\nabla^{N+1} n|^2)dx
+\frac{N+2}{1+t}\int(|\nabla^{N} n|^2+|\nabla^{N+1} n|^2)dx\\
&\lesssim \left(\frac{N+2}{1+t}\right)^2 \int(|\nabla^{N-1} n|^2+|\nabla^{N} n|^2)dx+(1+t)^{-(N+3)}\\
&\lesssim (1+t)^{-(N+\frac{5}{2})}+(1+t)^{-(N+3)}\\
&\lesssim (1+t)^{-(N+\frac{5}{2})}.
\end{aligned}
\end{equation}
Multiplying \eqref{2715} by $(1+t)^{N+2}$ and integrating the resulting inequality over $[0, t]$,
then it follows directly
\begin{equation*}
\|\nabla^N n\|_{H^1}^2 \le C(1+t)^{-\left(\frac{3}{2}+N\right)},
\end{equation*}
which, together with \eqref{261}, implies
\begin{equation}\label{2716}
\|\nabla^k n\|_{H^{N+1-k}}^2 \le C (1+t)^{-\frac{3}{2}-k}
\end{equation}
for $k=0, 1, 2, ..., N.$
On the other hand, from the inequality \eqref{279}, it arrives at
\begin{equation}\label{2717}
\begin{aligned}
&\frac{d}{dt}\int |\nabla^{N+1} n|^2 dx+\!\int |\nabla^{N+2} n|^2 dx\\
&\lesssim \sum_{k=2}^{N-1} \|\nabla^k u\|_{H^1}^2\|\nabla^{N+2-k}n\|_{L^2}^2
          +\|\nabla u\|_{H^1}^2\|\nabla^{N+1} n\|_{L^2}^2
          +\|\nabla^N u\|_{L^2}^2 \|\nabla^2 n\|_{H^1}^2\\
&\lesssim \sum_{k=2}^{N-1} (1+t)^{-(\frac{3}{2}+k)}(1+t)^{-(\frac{7}{2}+N-k)}
          +(1+t)^{-\frac{5}{2}}(1+t)^{-(\frac{3}{2}+N)}+(1+t)^{-(N+4)}\\
&\lesssim (1+t)^{-(N+5)}+(1+t)^{-(N+4)}\\
&\lesssim (1+t)^{-(N+4)}.
\end{aligned}
\end{equation}
Taking the Fourier splitting method as \eqref{264}, we have
\begin{equation}\label{2718}
\begin{aligned}
&\frac{d}{dt}\int |\nabla^{N+1} n|^2 dx+\frac{N+3}{1+t}\int |\nabla^{N+1} n|^2 dx\\
&\lesssim \left(\frac{N+3}{1+t}\right)^2 \int |\nabla^{N} n|^2 dx+(1+t)^{-(N+4)}\\
&\lesssim (1+t)^{-(\frac{7}{2}+N)}+(1+t)^{-(N+4)}\\
&\lesssim (1+t)^{-(\frac{7}{2}+N)}.
\end{aligned}
\end{equation}
Multiplying \eqref{2718} by $(1+t)^{N+3}$ and integrating the resulting inequality
over $[0, t]$, then it follows directly
\begin{equation*}
\|\nabla^{N+1} n\|_{L^2}^2 \le C(1+t)^{-\left(\frac{5}{2}+N\right)},
\end{equation*}
which, together with \eqref{2716}, completes the proof of lemma.
\end{proof}

\emph{\bf{Proof for Theorem \ref{Decay1}:}}\ With the help of Lemma \ref{lemma2.6}
and Lemma \ref{lemma2.7}, we complete the proof of Theorem \ref{Decay1}.

\section{Proof of Theorem \ref{Decay2}}

\quad In this section, we will establish the time decay rates for the mixed space-time derivatives of density, velocity
and direction field. More precisely, we have the following decay rates.

\begin{lemm}\label{lemma3.1}
Under the assumptions of Theorem \ref{Decay1}, the global solution $(\varrho, u, n)$ of problem \eqref{2.1}-\eqref{2.4}
has the time decay rates
\begin{equation}\label{3.1}
\|\nabla^k \varrho_t(t)\|_{H^{N-1-k}}^2 \le (1+t)^{-\frac{5}{2}-k},
\end{equation}
for any integer $k=0,1,...,N-2.$
\end{lemm}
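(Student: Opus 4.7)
The plan is to start from the continuity equation in $(2.1)_1$, which we rewrite as
\begin{equation*}
\varrho_t = -\operatorname{div}u - \varrho\operatorname{div}u - u\cdot\nabla\varrho.
\end{equation*}
For every integer $j$ with $k\le j\le N-1$, I apply $\nabla^{j}$, take the $L^2$ norm, and reduce the lemma (after summing the squared norms over $k\le j\le N-1$) to establishing the single inequality
\begin{equation*}
\|\nabla^{j+1}u\|_{L^2}+\|\nabla^{j}(\varrho\operatorname{div}u)\|_{L^2}+\|\nabla^{j}(u\cdot\nabla\varrho)\|_{L^2}\lesssim (1+t)^{-\frac{5+2j}{4}}.
\end{equation*}
Squaring and summing over $j$ then yields exactly $(1+t)^{-\frac{5}{2}-k}$.

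The linear contribution is immediate: Theorem \ref{Decay1} gives $\|\nabla^{j+1}u\|_{L^2}\le C(1+t)^{-\frac{3+2(j+1)}{4}}=C(1+t)^{-\frac{5+2j}{4}}$ for all $0\le j\le N-1$, which is already the target rate.

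For the two nonlinear pieces, I expand them via the Leibniz formula into finite sums of products $\nabla^{l}\varrho\cdot\nabla^{j+1-l}u$ (and the symmetric $\nabla^{l}u\cdot\nabla^{j+1-l}\varrho$) for $0\le l\le j$, and I bound each product in $L^2$ by Hölder's inequality with the splitting $L^{3}\cdot L^{6}$. In the generic range $1\le l\le j-1$, Remark \ref{remark1.4} supplies
\begin{equation*}
\|\nabla^{l}\varrho\|_{L^3}\|\nabla^{j+1-l}u\|_{L^6}\lesssim (1+t)^{-1-\frac{l}{2}}\cdot(1+t)^{-1-\frac{j+1-l}{2}}=(1+t)^{-\frac{5+j}{2}},
\end{equation*}
which is strictly faster than the required $(1+t)^{-\frac{5+2j}{4}}$. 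The endpoint cases $l=0$ and $l=j$ are handled by placing one factor in $L^{\infty}$ and the other in $L^{2}$ (e.g.\ $\|\varrho\|_{L^\infty}\|\nabla^{j+1}u\|_{L^2}$ and its mirror), invoking the $L^{\infty}$-rates of Remark \ref{remark1.4}; these again beat the target rate since the $L^{\infty}$ norm of $(\varrho,u,\nabla\varrho)$ decays at least like $(1+t)^{-3/2}$.

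The main obstacle is the bookkeeping at the top order $j=N-1$ with $l=j$ or $l=0$, where Remark \ref{remark1.4} no longer directly covers the needed $L^{p}$ rate (it supplies $L^p$-decay only up to order $N-2$, and $L^\infty$-decay only up to order $N-3$). I plan to circumvent this by a Gagliardo–Nirenberg interpolation \eqref{GN} between two neighboring $L^{2}$-decays furnished by Theorem \ref{Decay1}: for instance, $\|\nabla^{N-1}\varrho\|_{L^{3}}\lesssim \|\nabla^{N-1}\varrho\|_{L^{2}}^{1/2}\|\nabla^{N}\varrho\|_{L^{2}}^{1/2}$ and analogous estimates for $u$. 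Each such interpolated factor still decays faster than what the linear term provides, so the nonlinear contributions remain negligible and the induction on $j$ closes, completing the proof.
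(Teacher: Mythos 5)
Your proposal is essentially the paper's own proof: the paper likewise isolates $\varrho_t=-{\rm div}\,u-\varrho\,{\rm div}\,u-u\cdot\nabla\varrho$, bounds the linear part by $\|\nabla^{j+1}u\|_{L^2}$ and the nonlinear parts via the Leibniz formula and $L^3\cdot L^6$ H\"older estimates combined with the rates of Theorem \ref{Decay1}, merely phrasing the $L^2$ bound as a pairing with $\nabla^j\varrho_t$ absorbed by Young's inequality. One small caveat: at the top order $j=N-1$, Theorem \ref{Decay1} gives only $\|\nabla^{N}u\|_{L^2}\le\|\nabla^{N-1}u\|_{H^1}\lesssim(1+t)^{-\frac{1+2N}{4}}$ rather than your claimed $(1+t)^{-\frac{3+2N}{4}}$ (the estimate \eqref{1.8} for $u$ stops at $k=N-1$), but this weaker rate squared equals $(1+t)^{-\frac{5}{2}-(N-2)}$ and therefore still closes the argument for every $k\le N-2$, which is exactly why the lemma is restricted to that range.
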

\begin{proof}
Taking $k-$th (k=0,1,...,N-1) spatial derivatives to $\eqref{2.1}_1$, multiplying by $\nabla^k \varrho_t$
and integrating over $\mathbb{R}^3$, then we have
\begin{equation}\label{3.2}
\|\nabla^k \varrho_t\|_{L^2}^2
=-\int \nabla^k({\rm div}u+\varrho{\rm div}u+u\cdot \nabla \varrho)\nabla^k \varrho_tdx
=I\!I\!I_1+I\!I\!I_2+I\!I\!I_3.
\end{equation}
By virtue of the Young inequality, it follows directly
\begin{equation}\label{3.3}
I\!I\!I_1
\lesssim \|\nabla^{k+1}u\|_{L^2}^2+\varepsilon\|\nabla^k \varrho_t\|_{L^2}^2.
\end{equation}
Applying the Leibnitz formula, Young and Sobolev inequalities, we have
\begin{equation}\label{3.4}
\begin{aligned}
I\!I\!I_2
&\le \sum_{l=1}^k \|\nabla^l \varrho\|_{L^3} \|\nabla^{k+1-l}u\|_{L^6} \|\nabla^k \varrho_t\|_{L^2}
              + \|\varrho\|_{L^\infty} \|\nabla^{k+1}u\|_{L^2} \|\nabla^k \varrho_t\|_{L^2}\\
&\lesssim \sum_{l=1}^k\|\nabla^l \varrho\|_{H^1}^2 \|\nabla^{k+2-l}u\|_{L^2}^2
              + \|\varrho\|_{L^\infty}^2 \|\nabla^{k+1}u\|_{L^2}^2
              +\varepsilon\|\nabla^k \varrho_t\|_{L^2}^2\\
&\lesssim  \sum_{l=1}^k(1+t)^{-\frac{3}{2}-l}(1+t)^{-\frac{5}{2}-k+l}
              +(1+t)^{-3}(1+t)^{-\frac{3}{2}-k}
              +\varepsilon\|\nabla^k \varrho_t\|_{L^2}^2\\
&\lesssim (1+t)^{-4-k}+\varepsilon\|\nabla^k \varrho_t\|_{L^2}^2.
\end{aligned}
\end{equation}
In the same manner, it follows immediately
\begin{equation}\label{3.5}
I\!I\!I_3 \lesssim (1+t)^{-4-k}+\varepsilon\|\nabla^k \varrho_t\|_{L^2}^2.
\end{equation}
Inserting \eqref{3.3}-\eqref{3.5} into \eqref{3.2}, then it is easy to deduce
\begin{equation*}
\|\nabla^k \varrho_t\|_{L^2}^2
\lesssim (1+t)^{-4-k}+\|\nabla^{k+1}u\|_{L^2}^2
\end{equation*}
which, together with the time decay rates \eqref{1.8}, completes the proof of lemma.
\end{proof}

Next, we establish the time decay rates for the space-time derivatives of velocity.
\begin{lemm}\label{lemma3.2}
Under the assumptions of Theorem \ref{Decay1}, the global solution $(\varrho, u, n)$ of problem \eqref{2.1}-\eqref{2.4}
has the time decay rates
\begin{equation}\label{3.6}
\|\nabla^k u_t(t)\|_{L^{2}}^2 \le (1+t)^{-\frac{5}{2}-k},
\end{equation}
for any integer $k=0,1,...,N-2.$
\end{lemm}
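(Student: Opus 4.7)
I would proceed in parallel with Lemma \ref{lemma3.1}, but using the momentum equation to isolate $u_t$. Rewriting $\eqref{2.1}_2$ as
\begin{equation*}
u_t=\mu\Delta u+(\mu+\nu)\nabla{\rm div}\,u-\nabla\varrho+S_2,
\end{equation*}
I take $k$-th spatial derivatives of this identity, pair with $\nabla^k u_t$ in $L^2$, and use Cauchy--Schwarz together with Young's inequality to absorb a small multiple of $\|\nabla^k u_t\|_{L^2}^2$ into the left-hand side. This reduces the problem to verifying
\begin{equation*}
\|\nabla^{k+2} u\|_{L^2}^2+\|\nabla^{k+1}\varrho\|_{L^2}^2+\|\nabla^k S_2\|_{L^2}^2
\lesssim (1+t)^{-\frac{5+2k}{2}}
\end{equation*}
for $0\le k\le N-2$.

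The two linear contributions are immediate from Theorem \ref{Decay1}: $\|\nabla^{k+1}\varrho\|_{L^2}^2\lesssim(1+t)^{-(5+2k)/2}$ because $k+1\le N-1$. For $\|\nabla^{k+2}u\|_{L^2}^2$, when $k\le N-3$ the bound is even stronger, while the endpoint $k=N-2$ requires extracting $\|\nabla^N u\|_{L^2}$ from the $H^1$-norm of $\nabla^{N-1}u$, which still yields exactly $(1+t)^{-(2N+1)/2}=(1+t)^{-(5+2k)/2}$.

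For the nonlinear piece $\|\nabla^k S_2\|_{L^2}$, I would treat each of the five components
\begin{equation*}
u\cdot\nabla u,\quad h(\varrho)\Delta u,\quad h(\varrho)\nabla{\rm div}\,u,\quad f(\varrho)\nabla\varrho,\quad g(\varrho)\nabla n\cdot\Delta n
\end{equation*}
by expanding via the Leibniz formula, invoking Lemma \ref{lemma2.2} to reduce derivatives of $h,f,g$ to those of $\varrho$, and then distributing factors through H\"older's inequality while interpolating with Gagliardo--Nirenberg \eqref{GN}. A crude count shows that $u\cdot\nabla u$ contributes at worst $(1+t)^{-(11+2k)/2}$ and the other quadratic-in-perturbation terms behave comparably or better, so the entire nonlinearity is of strictly lower order than the linear part.

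The main obstacle is the director coupling $g(\varrho)\nabla n\cdot\Delta n$: after Leibniz, one has three groups of derivatives to balance against three different decay rates (those of $\varrho$, $\nabla n$, and $\nabla^2 n$). The dichotomy $l\le[k/2]$ versus $l>[k/2]$ already used in Lemma \ref{lemma2.4}, together with the endpoint Gagliardo--Nirenberg estimates, will handle these interpolations term by term. Once this bookkeeping is completed, combining the linear and nonlinear bounds yields \eqref{3.6}.
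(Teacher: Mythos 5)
Your proposal is correct and follows essentially the same route as the paper: the paper likewise tests the $k$-th derivative of the momentum equation against $\nabla^k u_t$, absorbs $\varepsilon\|\nabla^k u_t\|_{L^2}^2$ by Young's inequality, and reduces the claim to $\|\nabla^{k+2}u\|_{L^2}^2+\|\nabla^{k+1}\varrho\|_{L^2}^2$ plus nonlinear terms bounded via Leibniz, H\"older and the decay rates of Theorem \ref{Decay1} (the paper merely regroups the coefficients as $g(\varrho)=1-h(\varrho)$ and $f(\varrho)+1$ instead of isolating $S_2$). Your endpoint observation for $k=N-2$ and your treatment of $g(\varrho)\nabla n\cdot\Delta n$ using the $(N+1)$-order director decay match the paper's handling of $IV_2$ and $IV_4$.
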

\begin{proof}
Taking $k-$th (k=0,1,...,N-2) spatial derivatives to $\eqref{2.1}_2$, multiplying by
$\nabla^k u_t$ and integrating over $\mathbb{R}^3$, then we have
\begin{equation}\label{3.7}
\begin{aligned}
\|\nabla^k u_t\|_{L^2}^2
&=\!\int \!\nabla^k(-u \cdot \nabla u+g(\varrho)[\mu\Delta u+(\mu+\nu)\nabla {\rm div}u])\nabla^k u_tdx\\
&\quad -\!\int \!\nabla^k[(f(\varrho)+1)\nabla\varrho\!+\!g(\varrho)\nabla n\Delta n]\nabla^k u_tdx\\
&=IV_1+IV_2+IV_3+IV_4.
\end{aligned}
\end{equation}
Applying the Leibnitz formula, Holder and Sobolev inequalities, we have
\begin{equation}\label{3.8}
\begin{aligned}
IV_1
&\lesssim \sum_{l=0}^k \|\nabla^l u\|_{L^3}\|\nabla^{k+1-l}u\|_{L^6}\|\nabla^k u_t\|_{L^2}\\
&\lesssim \sum_{l=0}^k \|\nabla^l u\|_{H^1}^2\|\nabla^{k+2-l}u\|_{L^2}^2
          +\varepsilon\|\nabla^k u_t\|_{L^2}^2\\
&\lesssim \sum_{l=0}^k (1+t)^{-\frac{3}{2}-l}(1+t)^{-\frac{5}{2}-k+l}
          +\varepsilon\|\nabla^k u_t\|_{L^2}^2\\
&\lesssim (1+t)^{-4-k}+\varepsilon\|\nabla^k u_t\|_{L^2}^2.
\end{aligned}
\end{equation}
Similarly, we have
\begin{equation}\label{3.9}
\begin{aligned}
IV_2
&\lesssim \sum_{l=0}^k \|\nabla^l (g(\varrho)-1)\|_{L^\infty}\|\nabla^{k+2-l}u\|_{L^2}\|\nabla^k u_t\|_{L^2}
           +\|\nabla^{k+2}u\|_{L^2}\|\nabla^k u_t\|_{L^2}\\
&\lesssim \sum_{l=0}^k \|\nabla^l \varrho\|_{L^\infty}^2\|\nabla^{k+2-l}u\|_{L^2}^2
          +\|\nabla^{k+2}u\|_{L^2}^2
          +\varepsilon\|\nabla^k u_t\|_{L^2}^2\\
&\lesssim \sum_{l=0}^k (1+t)^{-\frac{5}{2}-l}(1+t)^{-\frac{5}{2}-k+l}
          +\|\nabla^{k+2}u\|_{L^2}^2
          +\varepsilon\|\nabla^k u_t\|_{L^2}^2\\
&\lesssim (1+t)^{-5-k}+\|\nabla^{k+2}u\|_{L^2}^2
          +\varepsilon\|\nabla^k u_t\|_{L^2}^2,
\end{aligned}
\end{equation}
and
\begin{equation}\label{3.10}
\begin{aligned}
IV_3
&\lesssim \sum_{l=0}^k \|\nabla^l [f(\varrho)+1]\|_{L^\infty}
                       \|\nabla^{k+1-l}\varrho\|_{L^2}\|\nabla^k u_t\|_{L^2}\\
&\lesssim \sum_{l=0}^k \|\nabla^l \varrho\|_{L^\infty}^2 \|\nabla^{k+1-l}\varrho\|_{L^2}^2
          +\|\nabla^{k+1}\varrho\|_{L^2}^2
          +\varepsilon\|\nabla^k u_t\|_{L^2}^2\\
&\lesssim \sum_{l=0}^k (1+t)^{-\frac{5}{2}-l}(1+t)^{-\frac{5}{2}-k+l}
          +\|\nabla^{k+1}\varrho\|_{L^2}^2
          +\varepsilon\|\nabla^k u_t\|_{L^2}^2\\
&\lesssim (1+t)^{-5-k}+\|\nabla^{k+1}\varrho\|_{L^2}^2+\varepsilon\|\nabla^k u_t\|_{L^2}^2.
\end{aligned}
\end{equation}
In the same manner, it arrives at
\begin{equation}\label{3.11}
\begin{aligned}
IV_4
&=\int \sum_{l=0}^k \sum_{m=0}^l C_k^l C_l^m \nabla^l g(\varrho)\nabla^{m+1}n\nabla^{k+2-l-m}n\nabla^k u_t dx\\
&\lesssim \sum_{l=1}^k \sum_{m=0}^l
          \|\nabla^l \varrho\|_{L^\infty}\|\nabla^{m+1}n\|_{H^1}\|\nabla^{k+3-l-m}n\|_{L^2}\|\nabla^k u_t\|_{L^2}\\
&\quad \  +\|g(\varrho)\|_{L^\infty}\|\nabla n\|_{H^1}\|\nabla^{k+3}n\|_{L^2}\|\nabla^k u_t\|_{L^2}\\
&\lesssim \sum_{l=1}^k \sum_{m=0}^l
          \|\nabla^l \varrho\|_{L^\infty}^2\|\nabla^{m+1}n\|_{H^1}^2\|\nabla^{k+3-l-m}n\|_{L^2}^2\\
&\quad \  +\|g(\varrho)\|_{L^\infty}^2\|\nabla n\|_{H^1}^2\|\nabla^{k+3}n\|_{L^2}^2
          +\varepsilon\|\nabla^k u_t\|_{L^2}^2\\
&\lesssim \sum_{l=1}^k \sum_{m=0}^l(1+t)^{-\frac{5}{2}-l}(1+t)^{-\frac{5}{2}-m}(1+t)^{-\frac{9}{2}-k+l+m}\\
&\quad \  +(1+t)^{-\frac{5}{2}}(1+t)^{-\frac{9}{2}-k}
          +\varepsilon\|\nabla^k u_t\|_{L^2}^2\\
&\lesssim (1+t)^{-7-k}+\varepsilon\|\nabla^k u_t\|_{L^2}^2.
\end{aligned}
\end{equation}
Inserting \eqref{3.8}-\eqref{3.11} into \eqref{3.7} and choosing $\varepsilon$ small enough, then we have
\begin{equation*}
\|\nabla^k u_t(t)\|_{L^2}^2
\le (1+t)^{-4-k}+\|\nabla^{k+2}u\|_{L^2}^2+\|\nabla^{k+1}\varrho\|_{L^2}^2.
\end{equation*}
Therefore, we complete the proof of lemma.
\end{proof}

Finally, we establish the time decay rates for the space-time derivatives of director.

\begin{lemm}\label{lemma3.3}
Under the assumptions of Theorem \ref{Decay1}, the global solution $(\varrho, u, n)$ of problem \eqref{2.1}-\eqref{2.4}
has the time decay rates
\begin{equation}\label{3.12}
\|\nabla^k n_t(t)\|_{L^2}^2 \le (1+t)^{-\frac{7}{2}-k},
\end{equation}
for any integer $k=0,1,...,N-1.$
\end{lemm}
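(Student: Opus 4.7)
The plan is to mirror the structure of the proofs of Lemmas \ref{lemma3.1} and \ref{lemma3.2}. From the third equation in \eqref{2.1} one rewrites $n_t = \Delta n - u \cdot \nabla n + |\nabla n|^2(n+w_0)$. Taking $\nabla^k$ of this identity, multiplying by $\nabla^k n_t$ and integrating over $\mathbb{R}^3$ yields
\begin{equation*}
\|\nabla^k n_t\|_{L^2}^2 = \int \nabla^k \Delta n \cdot \nabla^k n_t\, dx - \int \nabla^k(u\cdot\nabla n) \cdot \nabla^k n_t\, dx + \int \nabla^k(|\nabla n|^2(n+w_0)) \cdot \nabla^k n_t\, dx,
\end{equation*}
and the goal is to dominate each piece on the right by $(1+t)^{-\frac{7}{2}-k}$ plus $\varepsilon\|\nabla^k n_t\|_{L^2}^2$, then absorb the $\varepsilon$-term into the left-hand side.

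First I would treat the linear piece: by Cauchy--Schwarz and Young it is controlled by $C\|\nabla^{k+2} n\|_{L^2}^2 + \varepsilon\|\nabla^k n_t\|_{L^2}^2$. Theorem \ref{Decay1} provides $\|\nabla^{k+2} n(t)\|_{L^2} \le C(1+t)^{-(3+2(k+2))/4}$ as long as $k+2 \le N+1$, i.e.\ $k \le N-1$, which is exactly the declared range. Squaring yields precisely the target rate $(1+t)^{-\frac{7}{2}-k}$, so the Laplacian contribution is the \emph{dictating} term.

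For the transport term, I would expand $\nabla^k(u\cdot\nabla n)$ by Leibniz into $\sum_{l=0}^k \nabla^l u\cdot \nabla^{k+1-l} n$ and estimate each summand by Hölder in $L^3$--$L^6$--$L^2$ (or, when convenient, $L^\infty$--$L^2$--$L^2$), with one factor of $\nabla^k n_t$ absorbed by Young. Combining with the $L^2$, $L^p$ and $L^\infty$ decay rates supplied by Theorem \ref{Decay1} and Remark \ref{remark1.4}, each product decays at least as fast as $(1+t)^{-\frac{11+2k}{2}}$, comfortably below the target. The cubic term $|\nabla n|^2(n+w_0)$ is handled analogously: the constant $w_0$ gives a quadratic product $\nabla^{l+1} n\cdot\nabla^{k+1-l} n$ and the remaining piece produces a triple product $\nabla^{\cdot} n\cdot\nabla^{\cdot} n\cdot\nabla^{\cdot} n$, both of which decay strictly faster than $(1+t)^{-\frac{7}{2}-k}$ because every factor carries its own polynomial rate from Theorem \ref{Decay1}.

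The only delicate point I expect is the endpoint $k = N-1$, where the Laplacian contribution requires the $(N+1)$-th order bound $\|\nabla^{N+1} n\|_{L^2}\le C(1+t)^{-(5+2N)/4}$. This is precisely the sharp estimate established in Lemma \ref{lemma2.7}, so the range $k\le N-1$ in the statement is optimal with respect to the estimates at hand. After choosing $\varepsilon$ small enough to absorb the $\varepsilon\|\nabla^k n_t\|_{L^2}^2$ terms into the left-hand side, the conclusion \eqref{3.12} follows immediately.
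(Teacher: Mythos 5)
Your proposal is correct and follows essentially the same route as the paper: the identical decomposition of $\|\nabla^k n_t\|_{L^2}^2$ into the Laplacian, transport, and cubic contributions, with the Laplacian term dictating the rate via $\|\nabla^{k+2}n\|_{L^2}^2\lesssim (1+t)^{-\frac{7}{2}-k}$ (hence the range $k\le N-1$) and the nonlinear terms decaying strictly faster. The only cosmetic difference is the exact subcritical exponent you record for the transport term, which does not affect the argument.
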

\begin{proof}
Taking $k-$th (k=0,1,...,N-1) spatial derivatives to $\eqref{2.1}_3$, multiplying by $\nabla^k n_t$
and integrating over $\mathbb{R}^3$, it arrives at
\begin{equation}\label{3.13}
\|\nabla^k n_t\|_{L^2}^2
=\int \nabla^k(\Delta n-u \cdot \nabla n+|\nabla n|^2(n+w_0))\nabla^k n_tdx
=V_1+V_2+V_3.
\end{equation}
By virtue of the Young inequality, it follows immediately
\begin{equation}\label{3.14}
V_1
\lesssim \|\nabla^{k+2}n\|_{L^2}^2+\varepsilon\|\nabla^{k+1}n_t\|_{L^2}^2
\lesssim (1+t)^{-\frac{7}{2}-k}+\varepsilon\|\nabla^{k+1}n_t\|_{L^2}^2.
\end{equation}
Applying the Leibnitz formula, Holder and Young inequalities, we have
\begin{equation}\label{3.15}
\begin{aligned}
V_2
&\lesssim \sum_{l=0}^{k}\|\nabla^l u\|_{H^1}^2\|\nabla^{k+2-l}n\|_{L^2}^2+\varepsilon\|\nabla^k n_t\|_{L^2}^2\\
&\lesssim \sum_{l=0}^{k}(1+t)^{-\frac{3}{2}-l}(1+t)^{-\frac{7}{2}-k+l}+\varepsilon\|\nabla^k n_t\|_{L^2}^2\\
&\lesssim (1+t)^{-5-k}+\varepsilon\|\nabla^k n_t\|_{L^2}^2.
\end{aligned}
\end{equation}
and
\begin{equation}\label{3.16}
\begin{aligned}
V_3
&\lesssim \sum_{l=0}^{k-1}\sum_{m=0}^l
         \|\nabla^{m+1} n\|_{L^6}\|\nabla^{l+1-m}n\|_{L^6}\|\nabla^{k-l}n\|_{L^6}\|\nabla^k n_t\|_{L^2}\\
&\quad \ +\sum_{m=0}^k \|\nabla^{m+1} n\|_{L^3}\|\nabla^{l+1-m}n\|_{L^6}\|\nabla^k n_t\|_{L^2}\\
&\lesssim \sum_{l=0}^{k-1}\sum_{m=0}^l
         \|\nabla^{m+2} n\|_{L^2}^2\|\nabla^{l+2-m}n\|_{L^2}^2\|\nabla^{k+1-l}n\|_{L^2}^2\\
&\quad \ +\sum_{m=0}^k \|\nabla^{m+1} n\|_{H^1}^2\|\nabla^{l+2-m}n\|_{L^2}^2
         +\varepsilon\|\nabla^k n_t\|_{L^2}^2\\
&\lesssim \sum_{l=0}^{k-1}\sum_{m=0}^l
         (1+t)^{-\frac{7}{2}-m}(1+t)^{-\frac{7}{2}-l+m}(1+t)^{-\frac{5}{2}-k+l}\\
&\quad \ +\sum_{m=0}^k (1+t)^{-\frac{5}{2}-m}(1+t)^{-\frac{7}{2}-k+m}
         +\varepsilon\|\nabla^k n_t\|_{L^2}^2\\
&\lesssim (1+t)^{-6-k}+\varepsilon\|\nabla^k n_t\|_{L^2}^2.\\
\end{aligned}
\end{equation}
Substituting \eqref{3.14}-\eqref{3.16} into \eqref{3.13}, it arrives at
\begin{equation*}
\|\nabla^k n_t(t)\|_{L^2}^2 \le (1+t)^{-\frac{7}{2}-k}.
\end{equation*}
Therefore, we complete the proof of the lemma.
\end{proof}

\emph{\bf{Proof for Theorem \ref{Decay2}:}}\ With the help of the Lemma \ref{lemma3.1}, Lemma \ref{lemma3.2} and
Lemma \ref{lemma3.3}, we complete the proof of Theorem \ref{Decay2}.

\section{Appendix: Proof of Theorem \ref{Global-existence}}

\quad In this section, we will establish the global existence of solution for the compressible
nematic liquid crystal flows \eqref{1.1}-\eqref{1.3}. We only sketch it here since this proof is
standard. More precisely, assume there exists a constant $\delta>0$ such that
\begin{equation}\label{4.1}
\sqrt{\mathcal{E}_0^3 (t)}=\|\varrho(t)\|_{H^3}+\|u(t)\|_{H^3}+\|\nabla n(t)\|_{H^3} \le \delta.
\end{equation}
Here, we denote $\varrho=\rho-1$ and $n=d-w_0$. Then, we derive some energy estimates that play an important role for
establishing the global existence of solution under the assumption of \eqref{4.1}.
In fact, the following lemmas are easy to establish just following the idea
by Guo and Wang \cite{Guo-Wang}. Hence, we only state the results here for the sake of
brevity.

\begin{lemm}\label{lemma4.1}
If $\sqrt{\mathcal{E}_0^3(t)} \le \delta$, then we have
\begin{equation}\label{4.2}
\begin{aligned}
&\frac{d}{dt}\left\|\nabla^{k} (\varrho, u, \nabla n)\right\|_{L^2}^2
+C \left\|\nabla^{k+1} (u, \nabla n) \right\|_{L^2}^2
\lesssim \delta \left\|\nabla^{k+1}\varrho\right\|_{L^2}^2;\\
&\frac{d}{dt} \left\|\nabla^{k+1} (\varrho, u, \nabla n)\right\|_{L^2}^2
+C \left\|\nabla^{k+2} (u, \nabla n)\right\|_{L^2}^2
\lesssim \delta \left\|\nabla^{k+1} \varrho \right\|_{L^2}^2;\\
&\frac{d}{dt}\int \nabla^{k} u \cdot \nabla^{k+1} \varrho dx+C\|\nabla^{k+1} \varrho\|_{L^2}^2
\lesssim  \|\nabla^{k+1} u\|_{L^2}^2+\|\nabla^{k+2}u\|_{L^2}^2+\|\nabla^{k+3}n\|_{L^2}^2;
\end{aligned}
\end{equation}
where $k=0, 1, 2, ..., N-1$.
\end{lemm}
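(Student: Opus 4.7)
The plan is to derive the three estimates of Lemma \ref{lemma4.1} by the standard energy method applied to the perturbed system \eqref{2.1}--\eqref{2.2}, tracking the pressure/continuity cancellation and controlling all nonlinear contributions through \eqref{2.5}, Lemma \ref{lemma2.2}, and the Gagliardo--Nirenberg inequality \eqref{GN}, using the a priori bound $\sqrt{\mathcal{E}_0^3(t)}\le \delta$ in \eqref{4.1} to extract the small factor $\delta$.

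For the first inequality, I would apply $\nabla^k$ to $\eqref{2.1}_1$ and test against $\nabla^k\varrho$; apply $\nabla^k$ to $\eqref{2.1}_2$ and test against $\nabla^k u$; and apply $\nabla^{k+1}$ to $\eqref{2.1}_3$ and test against $\nabla^{k+1}n$. After integration by parts, the linear parts produce $\tfrac12\tfrac{d}{dt}\|\nabla^k(\varrho,u,\nabla n)\|_{L^2}^2$ together with the dissipation $\mu\|\nabla^{k+1}u\|_{L^2}^2+(\mu+\nu)\|\nabla^k\mathrm{div}\,u\|_{L^2}^2+\|\nabla^{k+2}n\|_{L^2}^2$; the coupling terms $\int\nabla^{k+1}\varrho\cdot\nabla^k u\,dx$ coming from the pressure gradient and from $\mathrm{div}\,u$ in $\eqref{2.1}_1$ cancel exactly (this is the reason for the normalization $P'(1)=1$). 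Every term of $S_1$, $S_2$, $S_3$ is at least quadratic: distributing derivatives by Leibniz, estimating the highest-derivative factor in $L^2$ and the remaining factors in $L^3\cap L^6\cap L^\infty$ via \eqref{GN}, and using \eqref{2.5} plus Lemma \ref{lemma2.2} to absorb derivatives of $h(\varrho),f(\varrho),g(\varrho)$, one obtains an upper bound of the form $\delta\bigl(\|\nabla^{k+1}\varrho\|_{L^2}^2+\|\nabla^{k+1}u\|_{H^1}^2+\|\nabla^{k+2}n\|_{L^2}^2\bigr)$; the last two terms are absorbed by the dissipation, leaving only $\delta\|\nabla^{k+1}\varrho\|_{L^2}^2$ on the right.

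The second inequality is obtained by the same procedure shifted up by one derivative (applying $\nabla^{k+1}$ to $\eqref{2.1}_{1,2}$ and $\nabla^{k+2}$ to $\eqref{2.1}_3$). The structure is identical; no new cancellation is needed. For the third inequality, which is the cross estimate producing dissipation on $\nabla^{k+1}\varrho$, I would apply $\nabla^k$ to $\eqref{2.1}_2$ and test against $\nabla^{k+1}\varrho$: the pressure term yields $\|\nabla^{k+1}\varrho\|_{L^2}^2$, the time-derivative term is rewritten as
\begin{equation*}
\int \nabla^k u_t\cdot\nabla^{k+1}\varrho\,dx
=\frac{d}{dt}\!\int \nabla^k u\cdot\nabla^{k+1}\varrho\,dx-\!\int \nabla^k u\cdot\nabla^{k+1}\varrho_t\,dx,
\end{equation*}
and $\varrho_t$ is replaced by $-\mathrm{div}\,u+S_1$ via $\eqref{2.1}_1$. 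Integration by parts converts the resulting linear piece into $\|\nabla^{k+1}u\|_{L^2}\|\nabla^{k+2}u\|_{L^2}$-type factors handled by Young's inequality (producing $\|\nabla^{k+1}u\|_{L^2}^2+\|\nabla^{k+2}u\|_{L^2}^2$ and an $\varepsilon\|\nabla^{k+1}\varrho\|_{L^2}^2$ absorbed on the left). The viscous terms contribute $\|\nabla^{k+2}u\|_{L^2}\|\nabla^{k+1}\varrho\|_{L^2}$, again by Young. The director nonlinearity $g(\varrho)\nabla n\cdot\Delta n$ of $S_2$ requires placing $\Delta n$ at one derivative higher, producing the $\|\nabla^{k+3}n\|_{L^2}^2$ term on the right; the remaining $S_1,S_2$ contributions are quadratic and small by \eqref{4.1}.

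The main obstacle I expect is the careful estimation of the supercritical term $|\nabla n|^2(n+w_0)$ in $S_3$ and the quadratic coupling $g(\varrho)\nabla n\cdot\Delta n$ in $S_2$: because $n$ itself (as opposed to $\nabla n$) only satisfies an $L^2$-smallness assumption through its difference with $w_0$, one must distribute all spatial derivatives onto the factors carrying at least one gradient, using \eqref{GN} and the product rule so that every factor lies in a space controlled by $\|\nabla n\|_{H^3}$ or $\|\varrho\|_{H^3}$ (hence by $\delta$), while the factor containing the undifferentiated $n+w_0$ is placed in $L^\infty$ via $\|n+w_0\|_{L^\infty}\lesssim 1$. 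Once the Leibniz decomposition is organized this way and the higher-order Sobolev interpolations of Lemma \ref{lemma2.1} are applied, every nonlinear contribution acquires the needed factor of $\delta$, and the three estimates of Lemma \ref{lemma4.1} follow.
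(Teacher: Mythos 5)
Your overall strategy is the right one, and in fact it is essentially the only ``proof'' the paper offers: the authors state that these estimates are easy to establish following Guo and Wang \cite{Guo-Wang} and give no details, so the $k$-th order energy identities, the cancellation of $\int\nabla^{k}\mathrm{div}\,u\,\nabla^{k}\varrho\,dx$ against the pressure contribution $\int\nabla^{k}\nabla\varrho\cdot\nabla^{k}u\,dx$, and the cross functional $\frac{d}{dt}\int\nabla^{k}u\cdot\nabla^{k+1}\varrho\,dx$ with $\varrho_t$ eliminated via $\eqref{2.1}_1$ are exactly what is intended. Your treatment of the first and third inequalities, and of the director nonlinearities via \eqref{GN}, \eqref{2.5} and Lemma \ref{lemma2.2}, is consistent with how the paper handles the analogous terms in the proof of Lemma \ref{lemma2.4}.

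There is, however, one genuine gap: you assert that the second inequality is obtained by ``the same procedure shifted up by one derivative'' with ``identical structure.'' If that were literally true, the right-hand side would come out as $\delta\|\nabla^{k+2}\varrho\|_{L^2}^2$, whereas the lemma claims $\delta\|\nabla^{k+1}\varrho\|_{L^2}^2$, one order lower than a naive shift would produce. This is not cosmetic: in the proof of Theorem \ref{Global-existence} the second inequality is invoked precisely at the top order ($k=m-1$, so the energy is $\|\nabla^{m}(\varrho,u,\nabla n)\|_{L^2}^2$ with $m$ up to $N$), where $\nabla^{m+1}\varrho$ is not controlled by the regularity $\varrho\in H^{N}$ and carries no dissipation. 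The second estimate therefore requires a modified argument (the analogue of Guo--Wang's top-order lemma rather than their lower-order one): in the terms $u\cdot\nabla\varrho$, $\varrho\,\mathrm{div}\,u$, $f(\varrho)\nabla\varrho$ and $h(\varrho)[\mu\Delta u+(\mu+\nu)\nabla\mathrm{div}\,u]$ one must integrate by parts so as to move one derivative off the $\varrho$-carrying factor and onto the test function $\nabla^{k+1}u$, whose extra derivative is then absorbed by the dissipation $\|\nabla^{k+2}u\|_{L^2}^2$ available at this level but not in the first inequality, and treat the remaining commutators by the fractional interpolation of Lemma \ref{lemma2.1} so that at most $k+1$ derivatives ever land on $\varrho$. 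A related, smaller imprecision occurs in your first inequality: you bound the nonlinear contributions by $\delta\|\nabla^{k+1}u\|_{H^1}^2$ and claim absorption, but the dissipation there contains only $\|\nabla^{k+1}u\|_{L^2}^2$; the $\|\nabla^{k+2}u\|_{L^2}^2$ component must be avoided altogether, again by integrating by parts in the second-order terms $h(\varrho)\Delta u$, $h(\varrho)\nabla\mathrm{div}\,u$ and $g(\varrho)\nabla n\cdot\Delta n$ before estimating.
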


\emph{\bf{Proof for Theorem \ref{Global-existence}:}} Let $N \ge 3$ and
$0 \le l \le m-1$ with $3 \le m \le N.$ Summing up the estimates $\eqref{4.2}_1$
from $k=l$ to $m-1$, it is easy to deduce
\begin{equation}\label{4.3}
\frac{d}{dt}\|\nabla^l (\varrho, u, \nabla n)\|_{H^{m-l-1}}^2+C_1 \|\nabla^{l+1} (u, \nabla n)\|_{H^{m-l-1}}^2
\le C_2 \delta \|\nabla^{l+1} \varrho\|_{H^{m-l-1}}^2.
\end{equation}
Taking $k=m-1$ in $\eqref{4.2}_2$, then we get
\begin{equation}\label{4.4}
\frac{d}{dt} \|\nabla^m (\varrho, u, \nabla n)\|_{L^2}^2+C_1 \|\nabla^{m+1}(u, \nabla n)\|_{L^2}^2
\le C_2 \delta \|\nabla^m \varrho\|_{L^2}^2.
\end{equation}
Adding \eqref{4.3} to \eqref{4.4}, it is easy to obtain
\begin{equation}\label{4.5}
\frac{d}{dt}\|\nabla^l (\varrho, u, \nabla n)\|_{H^{m-l}}^2+C_1 \|\nabla^{l+1} (u, \nabla n)\|_{H^{m-l}}^2
\le C_2 \delta \|\nabla^{l+1} \varrho\|_{H^{m-l-1}}^2.
\end{equation}
Summing up the estimate $\eqref{4.2}_3$ from $k=l$ to $m-1$, we obtain
\begin{equation}\label{4.6}
\frac{d}{dt}\sum_{l \le k \le m-1} \int \nabla^k u \cdot \nabla^{k+1} \varrho dx
+C_3  \|\nabla^{l+1} \varrho\|_{H^{m-l-1}}^2
\le C_4(\|\nabla^{l+1} u\|_{H^{m-l}}^2+\|\nabla^{l+2} \nabla n\|_{H^{m-l-1}}^2).
\end{equation}
Multiplying \eqref{4.6} by $2C_2 \delta / C_3$ and adding to \eqref{4.5}, then we have
\begin{equation}\label{4.7}
\begin{aligned}
&\frac{d}{dt} \mathcal{E}^m_l(t)
+C_5 \left(\|\nabla^{l+1} \varrho\|_{H^{m-l-1}}^2+\|\nabla^{l+1} u\|_{H^{m-l}}^2
         +\|\nabla^{l+1} \nabla n\|_{H^{m-l}}^2\right)\le 0,
\end{aligned}
\end{equation}
where $\mathcal{E}^m_l(t)$ is defined as
\begin{equation*}
\begin{aligned}
\mathcal{E}^m_l(t)=\|\nabla^l (\varrho, u, \nabla n)\|_{H^{m-l}}^2
+\frac{2C_2 \delta}{C_3} \sum_{l \le k \le m-1} \int \nabla^k u \cdot \nabla^{k+1} \varrho dx.
\end{aligned}
\end{equation*}
By virtue of the Young inequality, it is easy to obtain
\begin{equation*}
\begin{aligned}
\sum_{l \le k \le m-1} \int \left| \nabla^k u \cdot \nabla^{k+1} \varrho  \right| dx
\le 2\left( \|\nabla^{l+1} \varrho\|_{H^{m-l-1}}^2+\|\nabla^l u\|_{H^{m-l-1}}^2\right).
\end{aligned}
\end{equation*}
Hence, in view of the smallness of $\delta$, we deduce
\begin{equation}\label{4.8}
\begin{aligned}
C_6^{-1} \|\nabla^l (\varrho(t), u(t), \nabla n(t))\|_{H^{m-l}}^2
\le \mathcal{E}^m_l(t)
\le C_6  \|\nabla^l (\varrho(t), u(t), \nabla n(t))\|_{H^{m-l}}^2.
\end{aligned}
\end{equation}
Integrating \eqref{4.7} over $[0, t]$, then we obtain
\begin{equation*}
\mathcal{E}^m_l(t)
+C_5 \int_0^t (\|\nabla^{l+1} \varrho\|_{H^{m-l-1}}^2+\|\nabla^{l+1} (u, \nabla n)\|_{H^{m-l}}^2) d\tau\le \mathcal{E}^m_l(0),
\end{equation*}
which, together with \eqref{4.8}, gives
\begin{equation}\label{4.9}
\begin{aligned}
&\|\nabla^l (\varrho(t), u(t),\nabla n(t))\|_{H^{m-l}}^2
+\int_0^t ( \|\nabla^{l+1} \varrho\|_{H^{m-l-1}}^2+\|\nabla^{l+1}(u, \nabla n)\|_{H^{m-l}}^2 )d\tau\\
&\le C\|\nabla^l (\varrho_0, u_0, \nabla n_0)\|_{H^{m-l}}^2.
\end{aligned}
\end{equation}
Choosing $l=0$ and $m=3$ in \eqref{4.9}, then we obtain
\begin{equation}\label{4.10}
\|\varrho(t)\|_{H^3}^2+\|u(t)\|_{H^3}^2+\|\nabla n(t)\|_{H^3}^2
\le C(\|\varrho_0\|_{H^3}^2+\|u_0\|_{H^3}^2+\|\nabla n_0\|_{H^3}^2).
\end{equation}
By a standard continuity argument, the inequality \eqref{4.10} will close the a priori estimate \eqref{4.1}.
Hence, taking $l=0$ and $m=N$ in \eqref{4.9}, it is easy to obtain
\begin{equation*}
\begin{aligned}
\|(\varrho(t), u(t),\nabla n(t))\|_{H^N}^2
\ +\int_0^t (\|\nabla \varrho\|_{H^{N-1}}^2+\|\nabla (u ,\nabla n)\|_{H^N}^2) d\tau
\le C\|(\varrho_0, u_0, \nabla n_0)\|_{H^N}^2,
\end{aligned}
\end{equation*}
which completes the proof of  Theorem \ref{Global-existence}.

\section*{Acknowledgements}
Qiang Tao's research was supported by the NSF (Grant No.11171060) and the NSF (Grant No.11301345).
Zheng-an Yao's research was supported in part by NNSFC (Grant No.11271381) and
China 973 Program (Grant No. 2011CB808002).

\phantomsection
\addcontentsline{toc}{section}{\refname}

\end{document}